%% file: pi_property,uniformly_convex_and_UBCP.tex
\documentclass[10pt,leqno]{amsart}
\usepackage[letterpaper,textwidth=6.2truein,textheight=8.6truein,centering]{geometry}

\usepackage{bm}
\usepackage{bbm}
\usepackage{mathrsfs}
\usepackage{amsmath}
\usepackage{amssymb}
\usepackage{amsthm}

\usepackage{cite}
\usepackage{graphicx}
\usepackage{array}
\usepackage{color}
\usepackage[all]{xy}

\newtheorem{theorem}{Theorem}[section]
\newtheorem{lemma}[theorem]{Lemma}
\newtheorem{proposition}[theorem]{Proposition}
\newtheorem{corollary}[theorem]{Corollary}

\theoremstyle{definition}
\newtheorem{definition}[theorem]{Definition}

\theoremstyle{remark}
\newtheorem{remark}[theorem]{Remark}

\numberwithin{equation}{section}

\allowdisplaybreaks

\begin{document}

\input{abbreviation}

\title[\(\pi\)-Properties,uniformly convexity and Uniform Ball Coverings Properties]{\(\pi\)-PROPERTIES AND UNIFORM BALL COVERINGS for\\ operator spaces on separable uniformly convex spaces}

\keywords{Ball-Covering Property, \(\pi\)-Properties, Uniformly Convexity, Space of Operators, \(L_p\) Space, Local \(L_p\) Structure}

\author{Rui Liu}
\address{School of Mathematical Sciences and LPMC, Nankai University, Tianjin
300071, P.R. China}
\email{ruiliu@nankai.edu.cn}

\author{Jie Shen}
\address{School of Mathematical Sciences and LPMC, Nankai University, Tianjin
300071, P.R. China}
\email{1710064@mail.nankai.edu.cn}

\date{\today}

\begin{abstract}
	We prove a sufficient criterion for closed subspaces of operator spaces containing the finite-rank
	operators to have the uniform ball-covering property.
	Let \(F\) be a separable uniformly convex Banach space, and let \(\Lambda_F>1\) be a constant determined by its modulus of convexity.
	If \(F\) has the \(\pi_\lambda\)-property for some \(1\leq \lambda<\Lambda_F\), then for every Banach space \(E\) with separable dual,
	every closed subspace of \(\cB(E,F)\) containing \(\cF(E,F)\) has the UBCP.
	The proof uses a contraction estimate for near-metric finite-rank projections on uniformly convex spaces.
	We use this estimate to construct uniform ball coverings for the corresponding operator spaces.
	As applications, we obtain the UBCP for closed operator subspaces whose range spaces are 
	vector-valued \(L_p\)-spaces, or separable uniformly convex \(\cL_{p,C+}\)-spaces.
\end{abstract}
\maketitle
\section{Introduction}\label{sec:intro}

	The notion of the ball-covering property was introduced by Cheng \cite{C2006}.
	A normed space \(X\) is said to have the ball-covering property (BCP) if its unit sphere can be covered
	by countably many closed or open balls which do not contain the origin.
	The centers of these balls are called BCP points of \(X\).
	If a countable family \(\{B(x_n,r_n)\}_{n=1}^{\infty}\) covers the unit sphere and satisfies
	\(\norm{x_n}>r_n\) for all \(n\), then the strong ball-covering property asks, in addition, for a uniform
	upper bound on the radii. Luo and Zheng \cite{LZ2021} introduced and studied the \(r\)-strong
	ball-covering property and the \((r,\delta)\)-uniform ball-covering property.
	In the uniform version, the covering balls stay a positive distance away from the origin in a uniform
	way.

	The BCP has been studied in connection with several geometric properties of Banach spaces, including
	separability, completeness, reflexivity, smoothness, the Radon--Nikodym property \cite{CWZ2011}, uniform
	convexity and uniform non-squareness \cite{CLL2010}, strict convexity and dentability
	\cite{SC2015,SC2018}, and universal finite representability and B-convexity \cite{Z2012}.
	It is also related to geometric and topological properties of Banach spaces;
	see, for example, \cite{AG2023,CKZ2020,CLL2023,FR2016,LZ2020,S2021}.

	It follows directly from the definition that every separable normed space has the BCP, but the converse
	is not true \cite{C2006,CCL2008}. Cheng \cite{C2006} proved that the non-separable space \(\ell^\infty\)
	has the BCP. However, Cheng, Cheng and Liu \cite{CCL2008} showed that \(\ell^\infty\) can be equivalently
	renormed so that the renormed space fails the BCP. The class of non-separable Banach spaces whose duals
	are \(w^*\)-separable is also related to ball-covering renorming results.
	Cheng, Shi and Zhang \cite{CSZ2009} proved that \(X^*\) is \(w^*\)-separable if and only if \(X\) can be
	\((1+\varepsilon)\)-renormed to have the BCP for every \(\varepsilon>0\).
	Fonf and Zanco \cite{FZ32009} proved the corresponding statement for the SBCP.

	For the uniform ball-covering property, separability again gives examples directly:
	if \((u_n)\) is dense in \(S_X\), then balls with centres \(2u_n\) and any fixed radius larger than \(1\)
	and smaller than \(2\) cover \(S_X\). Thus, in this paper, the problem is to obtain the UBCP for
	operator spaces which may be nonseparable. We use this distinction below: the range space \(F\) and
	the dual \(E^*\) are separable, but the operator spaces need not be separable.

	For spaces of operators, the ball-covering problem is closely related to finite-rank approximations of
	the range space. Liu, Liu, Lu and Zheng \cite{LLLZ2022} proved that if \(X^*\) is separable, then every
	subspace of \(B(X,\ell_p)\) containing the finite-rank operators has the UBCP for \(1<p<\infty\), and
	they also obtained necessary conditions for \(B(X,Y)\) to have the BCP.
	In \cite{BLS2025}, the BCP of the renormed operator space \((B(X),\norm{\cdot}_\alpha)\),
	\(0\leq\alpha\leq1\), was characterized for spaces \(X\) with a shrinking \(1\)-unconditional basis.
	In this paper we focus on Banach spaces with finite-rank projection approximations and give sufficient
	conditions which imply that closed subspaces of \(B(E,F)\), which may be nonseparable, have the UBCP.

	The approximation property and its variants are basic notions in Banach space theory.
	The \(\pi\)-property is a projectional strengthening of the bounded approximation property:
	one approximates the identity by finite-rank projections.
	In the metric case the projections can be chosen with norm at most \(1\), while the
	\(\pi_\lambda\)-property allows a uniform bound \(\lambda\).
	Spaces with bases, finite-dimensional decompositions, and classical conditional expectation
	decompositions provide standard examples. We ask whether projectional approximation
	assumptions on the range space force ball-covering properties for spaces of operators.
	We formulate the answer for closed subspaces \(Z\subseteq \cB(E,F)\) which contain the finite-rank
	operators. This assumption is useful in the proof. If \((P_n)\) is a finite-rank projection
	approximation on \(F\), then \(T\mapsto P_nT\) sends \(Z\) into \(\cF(E,F)\), and hence into \(Z\).

	Uniform convexity provides quantitative estimates.
	It was studied by Clarkson \cite{C1936} and implies reflexivity by the Milman--Pettis theorem.
	The modulus of convexity measures how far the midpoint of two separated unit vectors must lie inside the
	unit ball. For classical \(L_p\)-spaces the modulus is known through the Clarkson and Hanner
	inequalities \cite{C1936,H1956}. Vector-valued \(L_p\)-spaces also preserve uniform convexity under the
	usual hypotheses. These facts make uniformly convex spaces a useful class for combining geometric
	convexity with finite-rank projection approximations.

	The ball-covering property depends strongly on the norm. We consider operator spaces because the induced
	operator norm changes when the norm of \(X\) changes, while the algebraic structure of \(\cB(X)\) remains
	fixed. Thus induced operator norms on \(\cB(X)\) give a way to ask which properties of \(X\) imply the
	BCP for \(\cB(X)\).

	We prove that uniform convexity together with a near-metric \(\pi_\lambda\)-property implies the UBCP for
	closed operator subspaces containing the finite-rank operators.
	The metric \(\pi\)-property is the case \(\lambda=1\); the following theorem gives the quantitative form.

	\begin{theorem}\label{thm:intro-main}
		Let \(F\) be a separable uniformly convex Banach space with the \(\pi_\lambda\)-property, where
		\(1\leq\lambda<\Lambda_F\). Let \(E\) be a Banach space such that \(E^*\) is separable.
		If \(Z\) is a closed subspace of \(\cB(E,F)\) containing \(\cF(E,F)\), then \(Z\) has the UBCP.
	\end{theorem}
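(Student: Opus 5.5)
The plan is to build the ball covering explicitly. The centres will come from a countable family of finite-rank operators, built from a dense sequence in \(E^*\), dense sets in the ranges of the projections \(P_n\) on \(F\), and rational coefficients. The radii will be tied to a constant \(r<2\) which, together with the uniform gap \(\norm{x_n}-r_n\geq\delta\), is governed by the modulus of convexity \(\delta_F\) and by how close \(\lambda\) is to \(1\).

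\textbf{Step 1: a contraction estimate for near-metric projections.} Let \(P\) be a finite-rank projection on \(F\) with \(\norm{P}\leq\lambda\), and let \(y\in F\) with \(\norm{y}\leq 1\) and \(\norm{Py}\geq 1-\eta\). I want to show that \(\norm{y-Py}\) is then controlled by \(\lambda-1\) and \(\eta\) through \(\delta_F\).

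1. Put \(u=Py\) and \(v=y-Py\). For \(t\in[-1,1]\) we have \(P(u+tv)=u\), so \(\norm{u+tv}\geq \norm{u}/\lambda\).
2. Apply this with \(t=\pm 1\). The vectors \(y=u+v\) and \(u-v\) lie in the ball of radius \(\max(1,\norm{u-v})\), and their midpoint is \(u\), whose norm is close to \(1\).
3. Uniform convexity then forces \(\norm{2v}=\norm{(u+v)-(u-v)}\) to be small, provided \(\norm{u-v}\) is not much larger than \(1\).
4. The issue is that \(\norm{u-v}\leq \norm{u}+\norm{v}\) is not a priori bounded by \(1\). This is exactly where \(\lambda<\Lambda_F\) should enter. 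Since \(\norm{u}\leq\lambda\), we have \(\norm{v}\leq 1+\lambda\). A bootstrap using convexity of \(t\mapsto\norm{u+tv}\) together with \(\delta_F\) should close the estimate.

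I expect \(\Lambda_F\) to be defined precisely so that this inequality has a nontrivial solution. The outcome should be a function \(\varepsilon(\lambda,\eta)<1\), giving a bound of the form \(\norm{(I-P)y}\leq \varepsilon\) whenever \(\norm{Py}\) is close to \(\norm{y}\).

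\textbf{Step 2: reduce to finite-rank norming.} Fix \(T\in Z\) with \(\norm{T}=1\). Since \(F\) has the \(\pi_\lambda\)-property, choose projections \(P_n\) with \(P_n\to I\) strongly and \(\norm{P_n}\leq\lambda\).

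1. Pick \(x\in S_E\) with \(\norm{Tx}\approx 1\).
2. For large \(n\), \(\norm{P_nTx}\approx 1\).
3. The operator \(P_nT\) is finite rank, since \(\dim P_nF<\infty\), and it has norm at most \(\lambda\).
4. Write \(P_nT=\sum_i e_i^*\otimes f_i\) with \(f_i\) a basis of \(P_nF\). Approximate each \(e_i^*\) from the countable dense subset of \(E^*\) and each \(f_i\) from a countable dense subset of \(P_nF\). This produces a countable set \(\mathcal{C}\subseteq\cF(E,F)\subseteq Z\) of candidate centres.

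\textbf{Step 3: choose centres and radii.} For each \(S\in\mathcal{C}\) with \(\norm{S}\) close to \(1\), take the ball with centre \(\alpha S\) and radius \(r\), for suitable fixed \(\alpha>r\). Here \(\alpha\) and \(r\) depend only on \(\lambda\) and \(\delta_F\), which gives the uniform gap \(\alpha-r>0\).

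To show that \(T\) lies in some ball, estimate
\[
\norm{T-\alpha S}\leq \norm{(I-P_n)T}+\norm{P_nT-\alpha S}+\text{small},
\]
where \(S\) is taken to be a rescaling of an approximant of \(P_nT\).

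Here lies the main obstacle: \(\norm{(I-P_n)T}\) is an operator norm and need not be small, since \(T\) may be non-compact. Step 1 only controls \((I-P_n)Tx\) at vectors \(x\) where \(P_nTx\) is nearly norming.

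The plan for handling this is to split \(\norm{T-\alpha P_nT}\) pointwise over \(x\in B_E\), as \(\norm{(I-\alpha P_n)Tx}\), in two cases:

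1. where \(\norm{P_nTx}\) is large, use Step 1;
2. where it is small, use the triangle inequality with \(\norm{Tx}\leq1\) directly.

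Then choose \(\alpha\) slightly above \(1\) so that both cases give a bound at most some \(r<\alpha\) independent of \(T\). This yields the \((r,\delta)\)-uniform ball-covering property.
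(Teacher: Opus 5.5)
Your overall architecture matches the paper's: split $\norm{(I_F-\alpha P_n)Tx}$ by whether $\norm{P_nTx}$ is near $1$, use uniform convexity on the near-norming vectors, and draw centres from a countable dense set of finite-rank operators. The gap is the scale you choose in Step 3, which makes the argument fail. Your case 2 only gives $\norm{Tx}+\alpha\norm{P_nTx}$, and this is below $\alpha$ only when $\norm{P_nTx}<1-1/\alpha$. For $\alpha$ slightly above $1$ that threshold is tiny, while Step 1 controls only vectors with $\norm{P_nTx}$ close to $1$. So the intermediate range (say $\norm{Tx}=1$, $\norm{P_nTx}=1/2$) is not covered at all.

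A sharper estimate cannot repair this. Take $L_p[0,1]$ with $p$ close to $1$, which is separable, uniformly convex and has the metric $\pi$-property, and let $P_n$ be the dyadic conditional expectations. Testing on indicators of small subintervals of a dyadic interval shows that $\norm{I-P_n}$ is close to $2$ for every $n$. So for $T=I$ you get $\norm{T-\alpha P_nT}\geq\norm{I-P_n}-(\alpha-1)>\alpha$ whenever $\alpha$ is near $1$. Your intuition holds only in Hilbert space with orthogonal projections, where $\norm{I-\alpha P}=1$ for $1\leq\alpha\leq2$.

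The missing idea is to work in the opposite regime, with a large scale. Take $0<\eta<1$, $\Delta=\delta_F(\eta)$, $a=1/t$ with $0<t<\Delta/4$, and threshold $1-\Delta/2$.
\begin{itemize}
\item Case 2 then gives $\norm{Tx-aP_nTx}\leq 1+a(1-\Delta/2)=a(1-\Delta/2+t)<a$ automatically.
\item For case 1, write $Tx-aP_nTx=-a\bigl((1-t)P_nTx-t(I_F-P_n)Tx\bigr)$ and use $\norm{y-Py}<\eta+(\lambda-1)$ for near-norming $y$. This gives $\norm{Tx-aP_nTx}\leq a\bigl(\lambda-t(1-\eta)\bigr)$, which is uniformly below $a$ precisely when $\lambda-1<t(1-\eta)$.
\end{itemize}
So $\lambda-1$ must be small compared with $1/a$. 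This, not a bootstrap inside Step 1, is where $\lambda<\Lambda_F$ enters. The combined statement is Lemma~\ref{lem:projection-gap}, $\norm{P-tI_F}\leq\mu<1$. Lemma~\ref{lem:rescaled-projection} then absorbs the normalisation by $s=\norm{P_nT}\in[s_0,\lambda]$, which your Step 3 also leaves unaddressed.

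Step 1 does not close as written either, because comparing $u+v$ with $u-v$ involves the uncontrolled quantity $\norm{u-v}$. Compare instead $y$ and $Py/\lambda$, which both lie in $B_F$. Their midpoint is within $(\lambda-1)/2$ of $u+v/2$, and your own observation with parameter $1/2$ gives $\norm{u+v/2}\geq\norm{u}/\lambda$. So if $\norm{u}>1-\Delta/2$ and $\lambda-1<\Delta/4$, the midpoint has norm $>1-\Delta$. Uniform convexity then gives $\norm{y-Py/\lambda}<\eta$, hence $\norm{y-Py}<\eta+\lambda-1$.
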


	As an application, we obtain a result for local \(L_p\)-type range spaces. An \(\cL_{p,C+}\)-space is one whose
	finite-dimensional subspaces sit inside finite-dimensional pieces which are arbitrarily close to
	\(\ell_p^n\) with distortion at most \(C\). The corresponding operator-space result is the following.

	\begin{corollary}\label{cor:intro-LpCplus}
		Let \(1<p<\infty\), and let \(F\) be a separable uniformly convex \(\cL_{p,C+}\)-space. Let \(r_p\) and
		\(\Gamma_p\) be as in Lemma~\ref{lem:Lp-finite-dimensional-stability}. Suppose that there exist
		\(\eta>0\) and \(\rho\) such that \((C+\eta)^2<\rho<r_p\) and
		\((C+\eta)\Gamma_p(\rho)<\Lambda_F\). Let \(E\) be a Banach space such that \(E^*\) is separable.
		If \(Z\subseteq\cB(E,F)\) is a closed subspace containing \(\cF(E,F)\), then \(Z\) has the UBCP.
	\end{corollary}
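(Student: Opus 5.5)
The statement is a reduction of Corollary~\ref{cor:intro-LpCplus} to Theorem~\ref{thm:intro-main}. The plan is to verify that, under the stated numerical hypotheses, \(F\) has the \(\pi_\lambda\)-property for some \(\lambda\) with \(1\le\lambda<\Lambda_F\). All other hypotheses of Theorem~\ref{thm:intro-main} (separability and uniform convexity of \(F\), separability of \(E^*\), \(Z\supseteq\cF(E,F)\) closed) are assumed verbatim, so once this is done the theorem gives the UBCP of \(Z\) directly. I will take \(\lambda=(C+\eta)\Gamma_p(\rho)\). This number is \(<\Lambda_F\) by hypothesis, and I expect \(\Gamma_p(\rho)\ge 1\) from the way Lemma~\ref{lem:Lp-finite-dimensional-stability} is stated, so that \(\lambda\ge1\). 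If \(\lambda\) happened to be below \(1\), we could simply replace it by \(1\), since the \(\pi_\lambda\)-property is monotone in \(\lambda\).

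\textbf{Step 1: the local pieces.} Since \(F\) is separable, fix a dense sequence \((y_k)\) in \(F\). Inductively choose finite-dimensional subspaces \(G_1\subseteq G_2\subseteq\cdots\) with \(\{y_1,\dots,y_n\}\cup G_{n-1}\subseteq G_n\) and \(d(G_n,\ell_p^{m_n})\le C+\eta\). This uses the \(\cL_{p,C+}\) definition with slack \(\eta\). Fix isomorphisms \(T_n\colon G_n\to\ell_p^{m_n}\) with \(\|T_n\|\|T_n^{-1}\|\le C+\eta\).

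\textbf{Step 2: projections onto the pieces (main obstacle).} We need projections \(P_n\) from \(F\) onto \(G_n\) with \(\|P_n\|\le (C+\eta)\Gamma_p(\rho)\). Here I would invoke Lemma~\ref{lem:Lp-finite-dimensional-stability}. I expect it to say that a finite-dimensional space \(\rho\)-isomorphic to \(\ell_p^m\) inside an appropriate \(L_p\)-like ambient space, with \(\rho<r_p\), is complemented by a projection of norm at most \(\Gamma_p(\rho)\). The condition \((C+\eta)^2<\rho\) suggests the lemma is applied to a composite: a nested pair \(G_n\subseteq G_{n'}\) in which both are \((C+\eta)\)-close to \(\ell_p\)-spaces, so that the image of \(G_n\) inside \(\ell_p^{m_{n'}}\) under \(T_{n'}\) is \((C+\eta)^2\)-isomorphic to \(\ell_p^{m_n}\).

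The lemma then produces a projection of norm \(\le\Gamma_p(\rho)\) in \(\ell_p^{m_{n'}}\). Transferring it back via \(T_{n'}\) costs one factor \(C+\eta\), giving a projection \(Q_{n,n'}\colon G_{n'}\to G_n\) with norm at most \(\lambda\). To obtain a projection defined on all of \(F\), I would pass to a limit as \(n'\to\infty\): the \(Q_{n,n'}\) are uniformly bounded maps of \(\bigcup_{n'}G_{n'}\) (which is dense) into the fixed finite-dimensional space \(G_n\), so a diagonal/compactness argument over a countable dense set gives a limit operator \(P_n\colon F\to G_n\). This limit is a projection with \(\|P_n\|\le\lambda\). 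Reconciling the exact form of Lemma~\ref{lem:Lp-finite-dimensional-stability} with this transfer is the step I expect to require the most care.

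\textbf{Step 3: approximation of the identity.} For fixed \(k\) and \(n\ge k\), we have \(P_ny_k=y_k\). Since \(\sup_n\|P_n\|\le\lambda\), an \(\varepsilon/3\)-argument with the density of \((y_k)\) gives \(P_nx\to x\) for every \(x\in F\). Hence \(F\) has the \(\pi_\lambda\)-property with \(1\le\lambda<\Lambda_F\). Theorem~\ref{thm:intro-main} then yields that \(Z\) has the UBCP.
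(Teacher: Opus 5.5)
Your proposal is correct and matches the paper's proof. The paper proves your Steps 1--3 as Proposition~\ref{prop:LpCplus-piM}: nested pieces \(G_n\), Lemma~\ref{lem:Lp-finite-dimensional-stability} applied to \(T_kG_n\subseteq\ell_p^{m_k}\) via \((C+\eta)^2<\rho\), transfer back at a cost of \(C+\eta\), a diagonal compactness limit, and density; it then applies Theorem~\ref{thm:operator-space}, and the only difference is that it takes \(M\) with \((C+\eta)\Gamma_p(\rho)<M<\Lambda_F\) where you take \(\lambda=(C+\eta)\Gamma_p(\rho)\) itself, which is equally valid because the non-strict bound \(\|P_n\|\le(C+\eta)\Gamma_p(\rho)\) survives the pointwise limit.
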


	In particular, the conclusion holds for separable \(\cL_{p,1+}\)-range spaces.

	We note that Theorem \ref{thm:intro-main} is only a sufficient condition, and it is not a
	characterization. Indeed, the uniformly convex range assumption is not necessary for the UBCP of operator
	spaces. For example, \(\cB(c_0)\) has the UBCP by \cite{BLS2025AP}, whereas \(c_0\) is not uniformly
	convex and cannot be equivalently renormed to be uniformly convex, since it is not reflexive. Thus
	\(\cB(c_0)\) is not covered by the argument of Theorem \ref{thm:intro-main}.

	Further applications are given by separable \(L_p(\mu)\)-spaces, \(1<p<\infty\).
	This class contains the classical Lebesgue spaces, and \(L_p[0,1]\) is the basic model case. Recently, the UBCP of \(\cB(L_p[0,1])\) was presented in \cite{langemets2026dualbanachspacesballcovering} and \cite{mishra2026ballcoveringpropertymathbblmathbbx}.

	\begin{corollary}\label{cor:intro-lp}
		Let \(1<p<\infty\), and let \(Y=L_p(\mu)\) be separable. Let \(E\) be a Banach space such that
		\(E^*\) is separable. If \(Z\) is a closed subspace of \(\cB(E,Y)\) containing \(\cF(E,Y)\), then
		\(Z\) has the UBCP.
	\end{corollary}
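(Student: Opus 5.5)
Corollary~\ref{cor:intro-lp} will follow from Theorem~\ref{thm:intro-main} once we check two things for \(Y=L_p(\mu)\) separable with \(1<p<\infty\): that \(Y\) is uniformly convex, and that \(Y\) has the \(\pi_\lambda\)-property for some \(1\le\lambda<\Lambda_Y\). Since \(\Lambda_Y>1\) by definition, it suffices to prove the \emph{metric} \(\pi\)-property (\(\lambda=1\)).

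For uniform convexity we use Clarkson's inequalities (or Hanner's), which give a modulus of convexity \(\delta_Y(\varepsilon)>0\) for every \(\varepsilon\in(0,2]\). In particular \(Y\) is a separable uniformly convex space, so the constant \(\Lambda_Y>1\) is available.

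For the metric \(\pi\)-property we use conditional expectations. Since \(Y\) is separable, there is an increasing sequence of finite \(\sigma\)-algebras \(\mathcal{A}_n\), each generated by finitely many atoms \(A_{n,1},\dots,A_{n,k_n}\) of finite positive measure, such that the simple functions \(\sum_j c_j\mathbbm{1}_{A_{n,j}}\), over all \(n\), are dense in \(Y\). To find them, pick a countable dense set of simple functions with supports of finite measure and take the \(\sigma\)-algebras generated by the first \(n\) of their level sets, discarding null atoms. Define
\[
P_n f=\sum_{j} \Bigl(\mu(A_{n,j})^{-1}\int_{A_{n,j}} f\,d\mu\Bigr)\mathbbm{1}_{A_{n,j}}.
\]
Each \(P_n\) is a finite-rank projection. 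Jensen's (or H\"older's) inequality on each atom gives \(\|P_nf\|_p\le\|f\|_p\); the part of \(f\) off \(\bigcup_j A_{n,j}\) is simply discarded, which only lowers the norm. Moreover \(P_n g=g\) for every \(\mathcal{A}_n\)-simple \(g\), and this also holds for \(g\) simple with respect to \(\mathcal{A}_m\) whenever \(m\le n\). Combining this with \(\|P_n\|\le 1\) and density, a standard \(3\varepsilon\)-argument gives \(P_nf\to f\) for all \(f\in Y\). So \(Y\) has the \(\pi_1\)-property.

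Applying Theorem~\ref{thm:intro-main} with \(F=Y\) and \(\lambda=1<\Lambda_Y\) then gives the UBCP for every closed \(Z\) with \(\cF(E,Y)\subseteq Z\subseteq\cB(E,Y)\). The main technical point is choosing the \(\mathcal{A}_n\) correctly when \(\mu\) is only \(\sigma\)-finite: the atoms must have finite measure, and density must be checked. Both are handled by the construction from a countable dense family of finitely supported simple functions. Alternatively, one can note that \(L_p(\mu)\) is an \(\cL_{p,1+}\)-space and invoke Corollary~\ref{cor:intro-LpCplus} with \(C=1\), choosing \(\eta,\rho\) small so that \((1+\eta)^2<\rho<r_p\) and \((1+\eta)\Gamma_p(\rho)<\Lambda_Y\). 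This requires \(\Gamma_p(\rho)\to1\) as \(\rho\to1\), presumably part of Lemma~\ref{lem:Lp-finite-dimensional-stability}, but the direct route above avoids that dependence.
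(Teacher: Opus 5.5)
Your proof is correct and follows essentially the paper's route. The paper also gets uniform convexity from Clarkson--Hanner, and gets the metric \(\pi\)-property from the same averaging projections over the finite partitions generated by the level sets of a dense sequence of finitely supported simple functions (Lemma~\ref{lem:separable-lp-metric-pi}); it then applies the metric case Theorem~\ref{thm:operator-space-metric}, which is just your application of Theorem~\ref{thm:intro-main} with \(\lambda=1\).
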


	We also compare this with the quantitative approximation criteria obtained in
	\cite{BLS2025AP}. Those criteria are sufficient, but not necessary, for ball-covering properties of
	operator spaces. For instance, the \((2-\varepsilon)\)-UBAP/Haar-type estimate used there yields the UBCP
	for \(\cB(L_p[0,1])\) only in the range \(\frac32<p<3\) (see also \cite{BLS2025}). On the other hand,
	Corollary \ref{cor:intro-lp}, applied with \(Y=L_p[0,1]\) and \(E=Y\), shows that
	\(\cB(L_p[0,1])\) has the UBCP for every \(1<p<\infty\).
	Thus, for \(1<p\leq 3/2\) or \(3\leq p<\infty\), the UBCP still holds although the previous
	\((2-\varepsilon)\)-UBAP estimate does not apply. Hence the constant \(2\) is a limitation of that
	approximation-theoretic method, not a boundary for the ball-covering property itself.

	We also obtain the corresponding vector-valued form.

	\begin{corollary}\label{cor:intro-vector-lp-mu}
		Let \(1<p<\infty\), let \(X\) be a separable uniformly convex Banach space with the
		\(\pi_\lambda\)-property, and put \(Y=L_p(\mu;X)\). Assume that \(Y\) is separable and that
		\(1\leq\lambda<\Lambda_Y\). Let \(E\) be a Banach space such that \(E^*\) is separable. If
		\(Z\) is a closed subspace of \(\cB(E,Y)\) containing \(\cF(E,Y)\), then \(Z\) has the UBCP.
		In particular, the same conclusion holds for \(Y=L_p([0,1];X)\).
	\end{corollary}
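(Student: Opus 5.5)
The plan is to derive Corollary~\ref{cor:intro-vector-lp-mu} from Theorem~\ref{thm:intro-main} applied with \(F=Y=L_p(\mu;X)\). The hypotheses \(1\leq\lambda<\Lambda_Y\), separability of \(Y\) and separability of \(E^*\) are given. So two things remain: \(Y\) is uniformly convex, and \(Y\) has the \(\pi_\lambda\)-property with the same \(\lambda\) as \(X\).

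\textbf{Uniform convexity.} For \(1<p<\infty\) and uniformly convex \(X\), the Bochner space \(L_p(\mu;X)\) is uniformly convex. This is the classical result of Day (also due to McShane). I will quote it rather than reprove it. If a self-contained argument is wanted, one combines the modulus of \(X\) pointwise with the uniform convexity of scalar \(L_p\) (Clarkson), applied to the functions \(\norm{f(\cdot)}_X\) and \(\norm{g(\cdot)}_X\).

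\textbf{\(\pi_\lambda\)-property.} Let \((Q_n)\) be finite-rank projections on \(X\) with \(\norm{Q_n}\leq\lambda\) and \(Q_n\to I_X\) strongly. Since \(Y\) is separable, the measure algebra of \(\mu\) restricted to the supports of functions in \(Y\) is countably generated modulo null sets, and the relevant part of \(\mu\) is \(\sigma\)-finite. Choose an increasing sequence of finite \(\sigma\)-algebras \(\mathcal A_n\), generated by finitely many sets \(A_{n,1},\dots,A_{n,k_n}\) of finite positive measure, such that simple functions measurable with respect to \(\bigcup_n\mathcal A_n\) are dense in \(Y\). Let \(\mathbb E_n\) be the conditional-expectation averaging operator
\[
\mathbb E_n f=\sum_{j}\mu(A_{n,j})^{-1}\Bigl(\int_{A_{n,j}}f\,d\mu\Bigr)\mathbf 1_{A_{n,j}},
\]
which is a contractive projection on \(L_p(\mu;X)\) by Jensen's inequality for Bochner integrals. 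Define \(P_n=\mathbb E_n\otimes Q_n\), that is, \(P_nf=Q_n\circ(\mathbb E_nf)\). This is a finite-rank projection because \(Q_n\) commutes with averaging. Its norm satisfies \(\norm{P_nf}_p\leq\lambda\norm{\mathbb E_nf}_p\leq\lambda\norm f_p\), since \(Q_n\) acts pointwise.

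For strong convergence \(P_nf\to f\), the \(P_n\) are uniformly bounded by \(\lambda\), so it suffices to check a dense set. Take \(f=\sum_i x_i\mathbf 1_{B_i}\) with \(B_i\in\mathcal A_m\). For \(n\geq m\) we have \(P_nf=\sum_i(Q_nx_i)\mathbf 1_{B_i}\to f\).

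\textbf{Main obstacle.} The only delicate point is the measure-theoretic reduction: producing the filtration \((\mathcal A_n)\) with finite-measure atoms whose simple functions are dense. This needs separability of \(Y\), used to reduce to a \(\sigma\)-finite, countably generated measure, together with the density of simple functions in Bochner spaces. For \(\mu\) with atoms of infinite measure, those atoms contribute nothing to \(L_p\) and are discarded.

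\textbf{The case \(Y=L_p([0,1];X)\).} Here one takes dyadic partitions directly. This space is separable because \(X\) is, and Theorem~\ref{thm:intro-main} then gives the UBCP for every closed \(Z\) with \(\cF(E,Y)\subseteq Z\subseteq\cB(E,Y)\).
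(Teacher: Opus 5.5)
Your proposal is correct and follows essentially the same route as the paper. You quote uniform convexity of the Bochner space, build finite-rank projections of norm at most \(\lambda\) by composing averaging operators over finite partitions into sets of finite positive measure pointwise with the \(\pi_\lambda\)-projections of \(X\), and then apply the main theorem (Theorem~\ref{thm:operator-space}). The measure-theoretic step you flag as the main obstacle is handled more directly in the paper: \(\mathcal P_n\) is the partition generated by the level sets of the first \(n\) terms of a dense sequence \((u_j)\) of \(X\)-valued simple functions with finite-measure supports, so \(A_nu_j=u_j\) for \(j\leq n\) and no appeal to \(\sigma\)-finiteness or to the measure algebra is needed.
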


	Taking \(E=X\) in Theorem \ref{thm:intro-main} also gives the algebra result, since a separable uniformly
	convex space has separable dual. Thus, if \(X\) is separable, uniformly convex, and has the
	\(\pi_\lambda\)-property for some \(1\leq\lambda<\Lambda_X\), then every closed subspace of \(\cB(X)\)
	containing \(\cF(X)\) has the UBCP.

	The following is a list of notations that will be used in this article.
	\begin{itemize}
		\item \(\cB(E,F)\) -- the space of bounded linear operators from \(E\) into \(F\).
		\item \(\cF(E,F)\) -- the space of finite-rank operators from \(E\) into \(F\).
		\item \(\cB(X)=\cB(X,X)\) and \(\cF(X)=\cF(X,X)\)
		\item \(B_Z\) and \(S_Z\) -- the closed unit ball and unit sphere of a Banach space \(Z\), respectively.
		\item \(B(z,r)\) -- the open ball with center \(z\) and radius \(r\).
		\item \(I_Z\) -- the identity operator on \(Z\).
	\end{itemize}

	We use the following convention. In all covering assertions involving a unit sphere and points \(G_j\in
	S_Z\), the space \(Z\) is assumed to be nonzero. If \(Z=\{0\}\), then \(S_Z=\emptyset\), and the UBCP
	assertion is trivial.

\section{Preliminaries and important lemmas}\label{sec:prelim}

\begin{definition}
	Let \(Z\) be a Banach space. We say that \(Z\) has the ball-covering property (BCP, in short) if there
	exist \((z_j)_{j=1}^{\infty}\subseteq Z\) and \((r_j)_{j=1}^{\infty}\subseteq(0,\infty)\) such that
	\[ S_Z\subseteq \bigcup_{j=1}^{\infty}B(z_j,r_j), \qquad r_j<\norm{z_j}\quad (j\geq1). \]
	We say that \(Z\) has the uniform ball-covering property (UBCP, in short) if there exist \(r>0\) and
	\((z_j)_{j=1}^{\infty}\subseteq Z\) such that
	\[ S_Z\subseteq \bigcup_{j=1}^{\infty}B(z_j,r), \qquad r<\inf_{j\geq1}\norm{z_j}. \]
	\end{definition}

	Here \(B(z,r)\) denotes the open ball with centre \(z\) and radius \(r\).
	The closed-ball formulation gives the same properties considered here, since one may slightly enlarge
	each radius while keeping it below the norm of its centre.

	In this paper the UBCP covering will be constructed in the following form:
	\[ S_Z\subseteq \bigcup_{j=1}^{\infty}B(aG_j,r), \qquad \norm{G_j}=1,\qquad 0<r<a. \]
	This uniform-radius convention is the one used in this paper.
	It gives an \((r,\delta)\)-UBCP covering in the sense of \cite{LZ2021} by taking any
	\(0<\delta<\inf_j\norm{z_j}-r\).

	Let \(F\) be a uniformly convex Banach space. We use the modulus of convexity
	\[
		\delta_F(\varepsilon)=\inf\left\{1-\norm{\frac{x+y}{2}}:
		\norm{x}\leq 1,\ \norm{y}\leq 1,\ \norm{x-y}\geq \varepsilon \right\},
		\qquad 0<\varepsilon\leq2.
	\]
	Thus \(F\) is uniformly convex if and only if \(\delta_F(\varepsilon)>0\) for \(0<\varepsilon\leq2\).

	Fix \(0<\eta<1\) and put \(\Delta=\delta_F(\eta)>0\). If \(0<t<\Delta/4\), define
	\[ \alpha_F(\eta,t) = \min\left\{ \frac{\Delta}{4}, \frac{\Delta}{2}-t, \frac{t(1-\eta)}{2} \right\}>0. \]
	The three terms in this minimum will be used separately below to guarantee \(h<\Delta/4\),
	\(h<\Delta/2-t\), and \(h<t(1-\eta)/2\), where \(h=\lambda-1\).
	We call \((\eta,t,\lambda)\) an admissible triple if \(1\leq\lambda<1+\alpha_F(\eta,t)\).
	Equivalently, define
	\[
		\Lambda_F:=1+
		\sup_{\substack{0<\eta<1\\0<t<\delta_F(\eta)/4}}
		\alpha_F(\eta,t).
	\]
	Then \(\Lambda_F>1\). If \(1\leq\lambda<\Lambda_F\), then there are \(0<\eta<1\) and
	\(0<t<\delta_F(\eta)/4\) such that \(\lambda-1<\alpha_F(\eta,t)\).

	\begin{definition}
		Let \(F\) be a Banach space and let \(\lambda\geq1\). We say that \(F\) has the \(\pi_\lambda\)-property
		if for every finite set \(A\subseteq F\) and every \(\varepsilon>0\), there exists a finite-rank projection
		\(P:F\to F\) such that
		\[ \norm{P}\leq\lambda,\qquad \norm{Py-y}<\varepsilon\quad (y\in A). \]
		When \(\lambda=1\), we also call this the metric \(\pi\)-property.
	\end{definition}

	\begin{remark}\label{rem:net-to-sequence}
		Let \(F\) be separable. If \(F\) has the \(\pi_\lambda\)-property, then there exists a sequence of
		finite-rank projections \((P_n)_{n=1}^{\infty}\) such that \(\norm{P_n}\leq\lambda\) and \(P_ny\to y\)
		for every \(y\in F\). Indeed, let \((y_k)_{k=1}^{\infty}\) be dense in \(F\).
		For each \(n\), choose a finite-rank projection \(P_n\) such that \(\norm{P_n}\leq\lambda\) and
		\(\norm{P_ny_k-y_k}<1/n\) for \(1\leq k\leq n\). If \(y\in F\) and \(n\geq k\), then
		\(\norm{P_ny-y} \leq (\lambda+1)\norm{y-y_k}+\norm{P_ny_k-y_k}\).
		The uniform boundedness of \((P_n)\) and the density of \((y_k)\) therefore imply \(P_ny\to y\) for all
		\(y\in F\). The converse implication is immediate.
	\end{remark}

	We now prove the projection estimate used later. A uniformly bounded projection gives a contraction after
	a small translation, and dense subsets of the projected unit spheres then give the required ball covering.

	\begin{lemma}\label{lem:projection-gap}
	Let \(F\) be uniformly convex, and let \((\eta,t,\lambda)\) be an admissible triple. Put
	\[ \Delta=\delta_F(\eta),\qquad h=\lambda-1 \]
	and
	\[ \mu := \max\left\{ 1-\frac{\Delta}{2}+t,\, \lambda-t(1-\eta) \right\}. \]
	Then \(0<\mu<1\). Moreover, for every projection \(P:F\to F\) with \(\norm{P}\leq\lambda\), we have
	\(\norm{P-tI_F}\leq\mu\).
	\end{lemma}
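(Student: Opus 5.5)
The plan is to fix $x\in S_F$ and bound $\norm{Px-tx}$ by splitting into two cases according to whether $\norm{Px-x}<\eta$ or $\norm{Px-x}\ge\eta$. Each case is matched to one of the two terms in the definition of $\mu$. The claim $0<\mu<1$ should follow directly from admissibility. Since $t<\Delta/4$ we have $1-\Delta/2+t<1-\Delta/4<1$. Since $h<t(1-\eta)/2$ we have $\lambda-t(1-\eta)<1-t(1-\eta)/2<1$. Positivity is clear because $1-\Delta/2+t>1/2$, using $\Delta\le1$. Once $\norm{(P-tI_F)x}\le\mu$ holds for every $x\in S_F$, the operator bound follows.

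\emph{Case $\norm{Px-x}<\eta$.} Here I will use the decomposition $Px-tx=(1-t)Px+t(Px-x)$. It gives
\[
\norm{Px-tx}\le(1-t)\lambda+t\eta=\lambda-t(\lambda-\eta)\le\lambda-t(1-\eta),
\]
where the last step uses $\lambda\ge1$.

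\emph{Case $\norm{Px-x}\ge\eta$.} This is the main step, and it is where the projection property enters. Put $m=(x+Px)/2$. Since $P^2=P$, we have $Pm=Px$, and therefore $\norm{Px}\le\lambda\norm{m}$. The aim is to show that $\norm{m}$ is uniformly below $1$ by applying the modulus of convexity. We may assume $\norm{Px}>1$, since otherwise $\norm{Px-tx}\le 1+t$ is not yet enough but the estimate below still covers it. I will normalise by $y=Px/\max\{1,\norm{Px}\}$, so that $\norm{y}\le1$ and $\norm{Px-y}\le\lambda-1=h$. Then $\norm{x-y}\ge\eta-h$, and $\norm{m}\le\norm{(x+y)/2}+h/2$.

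There is a mismatch to resolve: the modulus is evaluated at $\eta$, while the separation we actually have is $\eta-h$. I would try to absorb the small loss $h$ directly into the bookkeeping. If that does not close, I would instead run the case split at a slightly larger threshold, or use the monotonicity of $\delta_F$ together with the slack available in $h<\Delta/4$. The target is an estimate of the form $\norm{m}\le1-\Delta+O(h)$. From it we get $\norm{Px}\le\lambda(1-\Delta+O(h))$, and hence
\[
\norm{Px-tx}\le\norm{Px}+t\le(1+h)(1-\Delta)+O(h)+t .
\]
The conditions $h<\Delta/4$ and $h<\Delta/2-t$ are then used to push this below $1-\Delta/2+t$.

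I expect the constant tracking in this second case to be the main obstacle. The difficulty is reconciling the separation $\eta-h$ with the modulus at $\eta$, while keeping the losses within the $\Delta/2$ margin supplied by the admissibility conditions. The structural input, namely $\norm{Px}\le\lambda\norm{(x+Px)/2}$, is the key idea. It forces $Px$ strictly inside the ball of radius $1$ whenever $Px$ is far from $x$.
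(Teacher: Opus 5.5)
Your first case, the check \(0<\mu<1\), and the observation \(P\bigl(\tfrac{x+Px}{2}\bigr)=Px\) are correct. The last gives \(\|Px\|\le\lambda\bigl\|\tfrac{x+Px}{2}\bigr\|\), which is exactly the structural input the paper uses. However, the second case, which you call the main step, is left unfinished, and with the split at \(\|Px-x\|\ge\eta\) it does not go through as sketched. There you only know \(\|x-y\|\ge\eta-h\), while \(\Delta=\delta_F(\eta)\) controls only pairs at distance at least \(\eta\). Monotonicity of \(\delta_F\) gives \(\delta_F(\eta-h)\le\Delta\), which is the wrong direction. ``Absorbing \(h\) into the bookkeeping'' would need a quantitative bound \(\delta_F(\eta-h)\ge\Delta-Ch\), with \(C\) small enough to fit the fixed margin \(h<\Delta/4\). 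Nothing in the hypotheses supplies such an estimate for a general uniformly convex \(F\). Nor can \(\eta\) be moved, since it belongs to the given admissible triple. (The sentence beginning ``We may assume \(\|Px\|>1\)'' also contradicts itself; with \(y=Px/\max\{1,\|Px\|\}\) no such reduction is needed.)

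The fix is the option you mention only in passing: split at \(\eta+h\) instead of \(\eta\).

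\emph{Case \(\|Px-x\|<\eta+h\).} Your own first-case estimate gives \(\|Px-tx\|<(1-t)\lambda+t(\eta+h)=\lambda-t(1-\eta)\) exactly. The slack \(th\) you discarded in \(\lambda-t(\lambda-\eta)\le\lambda-t(1-\eta)\) is precisely what pays for the larger threshold.

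\emph{Case \(\|Px-x\|\ge\eta+h\).} Then \(\|x-y\|\ge\eta\), so \(\|(x+y)/2\|\le1-\Delta\) and \(\|(x+Px)/2\|\le1-\Delta+h/2\). Hence \(\|Px\|\le(1+h)(1-\Delta+h/2)=1-\Delta+h(\tfrac32-\Delta+\tfrac h2)\). Since \(h<\Delta/4\) and \(\Delta\le1\), we have \(h(\tfrac32-\Delta+\tfrac h2)<\tfrac{13}{32}\Delta\), so \(\|Px\|\le1-\Delta/2\) and \(\|Px-tx\|\le1-\Delta/2+t\).

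This is the paper's argument in contrapositive form. The paper splits on \(\|Px\|\le1-\Delta/2\) versus \(\|Px\|>1-\Delta/2\). In the latter case it uses \(\|(x+Px)/2\|\ge\|Px\|/\lambda\) to get \(\|(x+Px/\lambda)/2\|>1-\Delta\). This gives \(\|x-Px/\lambda\|<\eta\), hence \(\|x-Px\|<\eta+h\), which feeds the same bound \((1-t)\lambda+t(\eta+h)\).
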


	\begin{proof}
		Since \((\eta,t,\lambda)\) is admissible,
		\(h<\Delta/4\), \(h<\Delta/2-t\), and \(h<t(1-\eta)/2\).
		Let \(x\in S_F\). We consider two cases.
        First assume that \(\norm{Px}\leq 1-\Delta/2\). Then
		\[ \norm{(P-tI_F)x} \leq \norm{Px}+t\norm{x} \leq 1-\frac{\Delta}{2}+t \leq \mu. \]

		Now assume that \(\norm{Px}>1-\Delta/2\). Since \(P\) is a projection, we have \(P((x+Px)/2)=Px\). Thus
		\[ \norm{\frac{x+Px}{2}} \geq \frac{\norm{Px}}{\norm{P}} > \frac{1-\Delta/2}{\lambda}. \]
		On the other hand,
		\(\norm{Px-Px/\lambda} \leq\lambda-1=h\).
		It follows that
		\begin{align*}
			\norm{\frac{x+Px/\lambda}{2}} &\geq \norm{\frac{x+Px}{2}} -\frac12\norm{Px-\frac{Px}{\lambda}} 
			> \frac{1-\Delta/2}{\lambda}-\frac{h}{2}.
		\end{align*}
		Since \(\lambda=1+h\) and \(h\geq0\), we have \(1/\lambda\geq1-h\). Therefore
		$
			\frac{1-\Delta/2}{\lambda}-\frac{h}{2} 
			\geq 1-\frac{\Delta}{2}-\frac{3h}{2} 
			> 1-\Delta.
		$
		Thus \(\norm{(x+Px/\lambda)/2}>1-\Delta\).
		Since \(\norm{x}=1\) and \(\norm{Px/\lambda}\leq1\), the definition of \(\Delta=\delta_F(\eta)\) gives
		\(\norm{x-Px/\lambda}<\eta\).
		Indeed, if this distance were at least \(\eta\), then the definition of \(\delta_F(\eta)\) would imply
		\(\norm{(x+Px/\lambda)/2}\leq 1-\Delta\),
		which contradicts the preceding strict inequality. Consequently,
		\(\norm{x-Px} \leq \norm{x-Px/\lambda} + \norm{Px/\lambda-Px} < \eta+h\).
		Hence
		\begin{align*}
			\norm{(P-tI_F)x} &= \norm{(1-t)Px-t(x-Px)} \\
			&\leq (1-t)\norm{Px}+t\norm{x-Px} \\
			&< (1-t)\lambda+t(\eta+h) \\
			&= \lambda-t(1-\eta) \leq \mu.
		\end{align*}
		The two cases imply
		\[ \norm{(P-tI_F)x}\leq\mu\qquad (x\in S_F), \]
		and so \(\norm{P-tI_F}\leq\mu\).

		It remains to check that \(\mu<1\). Since \(0<t<\Delta/4\), we have \(1-\Delta/2+t<1\).
		Moreover, \(\lambda-t(1-\eta)=1+h-t(1-\eta)<1\) because \(h<t(1-\eta)/2\). Therefore \(\mu<1\).
	\end{proof}

	\begin{lemma}\label{lem:rescaled-projection}
		Under the assumptions of Lemma \ref{lem:projection-gap}, let
		\( a=\frac{1}{t}>1. \)
		Then there exist \(s_0\in(0,1)\) and \(\theta\in(0,1)\) such that for every projection \(P:F\to F\) with
		\(\norm{P}\leq\lambda\), and every \(s\in[s_0,\lambda]\), we have \(\norm{I_F-\frac{a}{s}P}\leq
		a\theta\).
	\end{lemma}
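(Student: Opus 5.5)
We reduce to Lemma~\ref{lem:projection-gap} by factoring out the scalar in front of \(P\). Write
\[ I_F-\frac{a}{s}P=-\frac{a}{s}\Bigl(P-\frac{s}{a}I_F\Bigr)=-\frac{a}{s}\bigl(P-stI_F\bigr), \]
so that \(\norm{I_F-\frac{a}{s}P}=\frac{a}{s}\norm{P-stI_F}\). When \(s=1\), Lemma~\ref{lem:projection-gap} gives \(\norm{I_F-aP}\leq a\mu\) with \(\mu<1\). For general \(s\) we perturb around this case, using the triangle inequality
\[ \norm{P-stI_F}\leq\norm{P-tI_F}+t\abs{1-s}\leq\mu+t\abs{1-s}. \]
This yields, for every projection \(P\) with \(\norm{P}\leq\lambda\),
\[ \norm{I_F-\tfrac{a}{s}P}\leq a\cdot\frac{\mu+t\abs{1-s}}{s}. \]

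It therefore suffices to find \(s_0\in(0,1)\) such that
\[ \theta:=\sup_{s\in[s_0,\lambda]}\frac{\mu+t\abs{1-s}}{s}<1, \]
or at least such that this supremum is bounded by some \(\theta<1\). The function \(g(s)=(\mu+t\abs{1-s})/s\) is continuous on \((0,\infty)\), and \(g(1)=\mu<1\). Hence there is an interval \([s_0,s_1]\) around \(1\) on which \(g<1\), and the maximum of \(g\) over that compact interval is some \(\theta<1\). We pick \(s_0<1\) close to \(1\) so that \(g(s_0)<1\).

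The main obstacle is the upper endpoint \(\lambda\), since we need \(g(s)<1\) on all of \([1,\lambda]\), not just near \(1\). For \(s\geq1\) we have \(g(s)=(\mu-t+ts)/s\). If \(\mu\geq t\), then \(g\) is nonincreasing on \([1,\infty)\), so \(g(s)\leq g(1)=\mu\). If \(\mu<t\), then \(g(s)<t<1\) there, since \(t<\Delta/4<1\). In either case \(\sup_{[1,\lambda]}g\leq\max\{\mu,t\}<1\).

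On \([s_0,1]\) we have \(g(s)=(\mu+t-ts)/s\), which is decreasing in \(s\), so its maximum is \(g(s_0)\). We choose \(s_0\) with \(g(s_0)<1\); this is possible because \(g(s_0)\to\mu<1\) as \(s_0\to1\). Concretely, we need \(s_0>(\mu+t)/(1+t)\), and this quantity is less than \(1\).

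Finally set \(\theta=\max\{\mu,t,g(s_0)\}\in(0,1)\). Then \(\norm{I_F-\frac{a}{s}P}\leq a\theta\) for all \(s\in[s_0,\lambda]\) and all projections \(P\) with \(\norm{P}\leq\lambda\). The remaining checks, namely positivity of \(\theta\) and \(a=1/t>1\), are routine.
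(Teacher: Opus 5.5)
Your proof is correct, but it runs the perturbation differently from the paper.

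The paper keeps the coefficient of \(I_F\) fixed and perturbs the coefficient of \(P\):
\[
\norm{I_F-\tfrac{a}{s}P}\leq\norm{I_F-aP}+a\abs{1-\tfrac1s}\norm{P}\leq a\mu+a\lambda\max\{\tfrac{1}{s_0}-1,\,1-\tfrac{1}{\lambda}\},
\]
with \(s_0>\lambda/(\lambda+1-\mu)\). At the upper endpoint \(s=\lambda\) this costs \(a(\lambda-1)\), so the paper needs \(\lambda-1<1-\mu\). The paper asserts this inequality without comment. It is exactly where the admissibility conditions \(h<\Delta/2-t\) and \(h<t(1-\eta)/2\) enter, since together they are equivalent to \(h<1-\mu\).

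You instead factor out \(a/s\) and perturb the coefficient of \(I_F\). That costs only \(t\abs{1-s}\), and the cost is then damped by the factor \(1/s\). For \(s\geq1\) this gives \(g(s)=t+(\mu-t)/s\leq\max\{\mu,t\}\), no matter how large \(s\) is.

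So your argument uses nothing beyond \(0<\mu<1\) and \(t<1\). It yields the bound uniformly on all of \([s_0,\infty)\), and your threshold \((\mu+t)/(1+t)\) does not depend on \(\lambda\). The paper's route is a one-line triangle inequality, but it depends on the extra relation between \(\lambda\) and \(\mu\) built into the definition of \(\alpha_F\).
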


	\begin{proof}
		Choose
		\(s_0\in(\lambda/(\lambda+1-\mu),1)\).
		Then \(\lambda(1/s_0-1)<1-\mu\) and
		\(\lambda \max\{1/s_0-1,1-1/\lambda\}<1-\mu\).
		Define \(\theta := \mu+ \lambda \max\{1/s_0-1,1-1/\lambda\}\).
		Then \(0<\theta<1\).

		Let \(s\in[s_0,\lambda]\). Since \(s_0<1\leq\lambda\), we have
		\(\abs{1-1/s}\leq\max\{1/s_0-1,1-1/\lambda\}\). By Lemma \ref{lem:projection-gap},
		\(\norm{I_F-aP} = a\norm{P-tI_F}\leq a\mu\).
		Therefore
		\begin{align*}
			\norm{I_F-\frac{a}{s}P} &\leq \norm{I_F-aP} + \norm{aP-\frac{a}{s}P} \\
			&\leq a\mu + a\abs{1-\frac1s}\norm{P} \\
			&\leq a\mu+ a\lambda \max\left\{ \frac1{s_0}-1,\, 1-\frac1\lambda \right\} \\
			&=a\theta.
		\end{align*}
	\end{proof}

	\begin{lemma}\label{lem:abstract-covering}
		Let \(Z\) be a Banach space. Suppose that \((Q_n)_{n=1}^{\infty}\subseteq\cB(Z)\) is a sequence of
		projections, and that there exist constants
		\[ a>1,\qquad 0<s_0<1,\qquad M\geq1,\qquad 0<\theta<1 \]
		such that the following conditions hold.
		\begin{enumerate}
			\item For every \(n\), \(\norm{Q_n}\leq M\).
			\item For every \(z\in S_Z\),
			\( \liminf_{n\to\infty}\norm{Q_nz}\geq1. \)
			\item For every \(n\) and every \(s\in[s_0,M]\),
			\( \norm{I_Z-\frac{a}{s}Q_n}\leq a\theta. \)
			\item The set
			\( \bigcup_{n=1}^{\infty}S_{Q_nZ} \) has a countable norm dense subset.
		\end{enumerate}
	   Then \(Z\) has the UBCP.
	\end{lemma}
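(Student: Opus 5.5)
The plan is to use as centres the points \(aG\), where \(G\) ranges over a countable dense subset \(D\) of \(\bigcup_n S_{Q_nZ}\), which exists by (4). The common radius will be \(r:=a\theta'\) for a suitable \(\theta'\in(\theta,1)\). Each centre then has norm \(a>r\), so the uniformity condition \(r<\inf\norm{z_j}\) is automatic. It remains to show that every \(z\in S_Z\) lies in some ball \(B(aG,r)\).

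Fix \(z\in S_Z\). By (2) I will choose \(n\) with \(\norm{Q_nz}>s_0\); by (1) also \(\norm{Q_nz}\leq M\). Put \(s:=\norm{Q_nz}\in(s_0,M]\) and \(G_0:=Q_nz/s\in S_{Q_nZ}\). Condition (3) applied with this \(n\) and \(s\) gives
\[ \norm{z-aG_0}=\norm{\Bigl(I_Z-\frac{a}{s}Q_n\Bigr)z}\leq a\theta . \]
So \(z\) lies in the closed ball of radius \(a\theta\) about \(aG_0\). Note that \(G_0\) itself may not belong to \(D\).

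Next I pick \(G\in D\) with \(\norm{G-G_0}<\theta'-\theta\), which is possible by density in \(\bigcup_n S_{Q_nZ}\). Then
\[ \norm{z-aG}\leq a\theta+a\norm{G_0-G}<a\theta' = r, \]
so \(z\in B(aG,r)\). Choosing \(\theta'=(1+\theta)/2\) fixes all constants independently of \(z\), which gives a uniform radius \(r=a(1+\theta)/2<a\). Hence \(S_Z\subseteq\bigcup_{G\in D}B(aG,r)\) with \(r<a=\inf_{G\in D}\norm{aG}\), and \(Z\) has the UBCP.

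The only delicate point is choosing \(n\) so that \(s=\norm{Q_nz}\) falls in the window \([s_0,M]\) where (3) applies. The lower end follows from \(\liminf\norm{Q_nz}\geq1>s_0\), and the upper end from \(\norm{Q_n}\leq M\). The rest is bookkeeping. If \(D\) is finite or needs to be enumerated as a sequence, I will simply repeat elements. If \(Z=\{0\}\), the assertion is trivial by the stated convention.
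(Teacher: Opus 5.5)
Your proposal is correct and follows the paper's proof essentially verbatim: the same centres \(aG_j\) drawn from a countable dense subset of \(\bigcup_n S_{Q_nZ}\), the same choice of \(n\) with \(s=\norm{Q_nz}\in(s_0,M]\), and the same estimate via condition (3). The only difference is notation: your \(\theta'=(1+\theta)/2\) is the paper's \(\theta+\varepsilon\) with the particular choice \(\varepsilon=(1-\theta)/2\).
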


\begin{proof}
	Choose \(0<\varepsilon<1-\theta\) and put \(r=a(\theta+\varepsilon)\).
	Then \(0<r<a\). Let \(\{G_j:j\geq1\}\subseteq\bigcup_{n=1}^{\infty}S_{Q_nZ}\) be a countable
	norm dense subset of this union.

	Let \(z\in S_Z\). By condition 2 and \(s_0<1\), we can choose \(n\) such that \(s:=\norm{Q_nz}>s_0\).
	By condition 1, \(s=\norm{Q_nz}\leq \norm{Q_n}\norm{z}\leq M\). Thus \(s\in(s_0,M]\). Define
	\(A=Q_nz/\norm{Q_nz}=Q_nz/s\).
	Then \(A\in S_{Q_nZ}\). By density, choose \(j\) such that \(\norm{A-G_j}<\varepsilon\). It follows that
	\begin{align*}
		\norm{z-aG_j} &\leq \norm{z-aA}+a\norm{A-G_j} \\
		&< \norm{z-\frac{a}{s}Q_nz}+a\varepsilon \\
		&= \norm{\left(I_Z-\frac{a}{s}Q_n\right)z} +a\varepsilon \\
		&\leq a\theta+a\varepsilon=r.
	\end{align*}
	Therefore
	\[ S_Z\subseteq\bigcup_{j=1}^{\infty}B(aG_j,r). \]
	Since \(\norm{G_j}=1\), we have \(\norm{aG_j}=a>r\) for all \(j\).
	Hence this is a UBCP covering of \(Z\).
\end{proof}

\section{Uniform ball coverings of operator spaces}\label{sec:operators}

\begin{theorem}\label{thm:operator-space-metric}
	Let \(F\) be a separable uniformly convex Banach space with the metric \(\pi\)-property.
	Let \(E\) be a Banach space such that \(E^*\) is separable.
	If \(Z\) is a closed subspace of \(\cB(E,F)\) containing \(\cF(E,F)\), then \(Z\) has the UBCP.
\end{theorem}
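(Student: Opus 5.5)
We will apply Lemma~\ref{lem:abstract-covering} with operators \(Q_n\) given by left multiplication by finite-rank projections on \(F\). Since \(\lambda=1\), we have \(1<\Lambda_F\), so we may fix an admissible triple \((\eta,t,1)\). Lemmas~\ref{lem:projection-gap} and \ref{lem:rescaled-projection} then give \(a=1/t>1\), \(s_0\in(0,1)\) and \(\theta\in(0,1)\). These satisfy \(\norm{I_F-\frac{a}{s}P}\leq a\theta\) for every projection \(P\) on \(F\) with \(\norm{P}\leq1\) and every \(s\in[s_0,1]\). By Remark~\ref{rem:net-to-sequence}, choose finite-rank projections \(P_n\) on \(F\) with \(\norm{P_n}\leq1\) and \(P_ny\to y\) for all \(y\in F\). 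Define \(Q_nT=P_nT\) for \(T\in Z\). This maps \(Z\) into \(\cF(E,F)\subseteq Z\), and \(Q_n\) is a projection because \(P_n^2=P_n\). We take \(M=1\), so condition 1 holds.

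For condition 3, we use the estimate \(\norm{(I_Z-\frac{a}{s}Q_n)T}=\norm{(I_F-\frac{a}{s}P_n)T}\leq\norm{I_F-\frac asP_n}\norm{T}\leq a\theta\norm{T}\). This gives the bound directly.

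For condition 2, fix \(T\in S_Z\) and \(\varepsilon>0\). Pick \(x\in S_E\) with \(\norm{Tx}>1-\varepsilon\). Then \(\norm{P_nTx}\to\norm{Tx}\), so \(\liminf_n\norm{Q_nT}\geq1-\varepsilon\). Since \(\varepsilon\) was arbitrary, the liminf is at least \(1\). Pointwise convergence at a single vector is all that is needed here; we do not need \(P_nT\to T\) in norm.

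Condition 4 is the main step and the place where separability of \(E^*\) enters. We have \(Q_nZ\subseteq\{P_nT: T\in\cB(E,F)\}=\cB(E,P_nF)\), and in fact \(Q_nZ=\cB(E,P_nF)\), because every operator into the finite-dimensional space \(P_nF\) has finite rank and so lies in \(Z\). Since \(P_nF\) is finite dimensional with basis \(y_1,\dots,y_k\), every \(T\in\cB(E,P_nF)\) can be written as \(\sum_i f_i\otimes y_i\) with \(f_i\in E^*\). The map \((f_i)\mapsto\sum f_i\otimes y_i\) is a linear isomorphism of \((E^*)^k\) onto \(\cB(E,P_nF)\). Hence \(\cB(E,P_nF)\) is separable, and so is its unit sphere. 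A countable union of separable sets is separable, which gives condition 4.

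Lemma~\ref{lem:abstract-covering} then shows that \(Z\) has the UBCP. The only delicate checks are that \(Q_n\) really maps \(Z\) into itself, which uses \(\cF(E,F)\subseteq Z\), and the finite-rank tensor description used for the separability in condition 4.
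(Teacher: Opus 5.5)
Your proposal is correct and follows the paper's proof essentially step for step: the same admissible triple $(\eta,t,1)$ feeding Lemma~\ref{lem:rescaled-projection}, the same operators $Q_nT=P_nT$ with $M=1$, the same pointwise argument for the $\liminf$ condition, and the same separability argument through $(E^*)^{k}$ for condition 4. The only point you leave implicit is that the map $(f_i)\mapsto\sum_i f_i\otimes y_i$ is bounded, which is what transfers separability and is immediate from $\norm{\sum_i f_i\otimes y_i}\leq\sum_i\norm{f_i}\norm{y_i}$; your extra observation $Q_nZ=\cB(E,P_nF)$ is correct but not needed.
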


\begin{proof}
	Fix \(0<\eta<1\), put \(\Delta=\delta_F(\eta)\), and choose \(0<t<\Delta/4\).
	Then \((\eta,t,1)\) is an admissible triple. By Lemma \ref{lem:rescaled-projection}, there exist
	\(a=1/t\), \(0<s_0<1\), and \(0<\theta<1\) such that every projection \(P:F\to F\) with \(\norm{P}\leq1\)
	satisfies
	\[ \norm{I_F-\frac{a}{s}P}\leq a\theta \qquad (s\in[s_0,1]). \]

	By Remark \ref{rem:net-to-sequence}, choose finite-rank projections \((P_n)_{n=1}^{\infty}\) such that
	\(\norm{P_n}\leq1\) and \(P_ny\to y\) for all \(y\in F\). For \(T\in Z\), define \(Q_n(T)=P_n \circ T\).
	Since \(P_nT\) has finite rank and \(\cF(E,F)\subseteq Z\), the map \(Q_n\) is a well-defined projection on
	\(Z\), and \(\norm{Q_n}\leq1\). We verify the assumptions of Lemma \ref{lem:abstract-covering} with
	\(M=1\). If \(T\in S_Z\) and \(\varepsilon>0\), choose \(x\in B_E\) such that
	\(\norm{Tx}>1-\varepsilon\). Since \(P_nTx\to Tx\), we have
	\[ \liminf_{n\to\infty}\norm{Q_nT} \geq \lim_{n\to\infty}\norm{P_nTx} = \norm{Tx} > 1-\varepsilon. \]
	Letting \(\varepsilon\downarrow0\), we get \(\liminf_{n\to\infty}\norm{Q_nT}\geq1\). For \(s\in[s_0,1]\),
\begin{align*}
	\norm{I_Z-\frac{a}{s}Q_n} &= \sup_{\norm{T}\leq1} \norm{\left(I_F-\frac{a}{s}P_n\right)T} 
	\leq \norm{I_F-\frac{a}{s}P_n} \leq a\theta .
\end{align*}
	Finally, \(Q_nZ\subseteq \cB(E,P_nF)\). Since \(P_nF\) is finite dimensional and \(E^*\) is separable,
	\(\cB(E,P_nF)\) is separable. Indeed, if \(\dim P_nF=d_n\), choose a basis \((u_k)_{k=1}^{d_n}\) of
	\(P_nF\) and its coordinate functionals \((\phi_k)_{k=1}^{d_n}\). The coordinate map
	\(T\mapsto(\phi_1\circ T,\ldots,\phi_{d_n}\circ T)\) is a linear isomorphism from
	\(\cB(E,P_nF)\) onto \((E^*)^{d_n}\). Its inverse is
	\((f_1,\ldots,f_{d_n})\mapsto (x\mapsto\sum_{k=1}^{d_n}f_k(x)u_k)\).
	This inverse is bounded because all coordinate norms associated with the fixed finite-dimensional basis
	of \(P_nF\) are equivalent. Hence the coordinate map is a topological isomorphism from \(\cB(E,P_nF)\)
	onto \((E^*)^{d_n}\). Since \(E^*\) is separable, the finite product \((E^*)^{d_n}\) is separable, and
	therefore \(\cB(E,P_nF)\) is separable. Hence \(Q_nZ\) is separable for every \(n\), and
	\(\bigcup_{n=1}^{\infty}S_{Q_nZ}\) has a countable norm dense subset.
	Lemma \ref{lem:abstract-covering} implies that \(Z\) has the UBCP.
\end{proof}

\begin{theorem}\label{thm:operator-space}
	Let \(F\) be a separable uniformly convex Banach space with the \(\pi_\lambda\)-property, where
	\(1\leq\lambda<\Lambda_F\). Let \(E\) be a Banach space such that \(E^*\) is separable.
	If \(Z\) is a closed subspace of \(\cB(E,F)\) containing \(\cF(E,F)\), then \(Z\) has the UBCP.
\end{theorem}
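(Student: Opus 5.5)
The plan is to repeat the proof of Theorem \ref{thm:operator-space-metric}, with the admissible triple now chosen for the given \(\lambda\) and with \(M=\lambda\) in Lemma \ref{lem:abstract-covering}.

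\emph{Step 1: constants.} Since \(1\leq\lambda<\Lambda_F\), the definition of \(\Lambda_F\) gives \(0<\eta<1\) and \(0<t<\delta_F(\eta)/4\) with \(\lambda-1<\alpha_F(\eta,t)\). Thus \((\eta,t,\lambda)\) is an admissible triple. Lemma \ref{lem:projection-gap} then gives \(\mu<1\). Lemma \ref{lem:rescaled-projection} gives \(a=1/t>1\), \(s_0\in(0,1)\) and \(\theta\in(0,1)\) such that
\[
\norm{I_F-\tfrac{a}{s}P}\leq a\theta
\]
for every projection \(P\) on \(F\) with \(\norm{P}\leq\lambda\) and every \(s\in[s_0,\lambda]\).

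\emph{Step 2: the projections on \(Z\).} By Remark \ref{rem:net-to-sequence}, choose finite-rank projections \(P_n\) with \(\norm{P_n}\leq\lambda\) and \(P_ny\to y\) for all \(y\in F\). Define \(Q_nT=P_nT\) for \(T\in Z\).
\begin{itemize}
\item Since \(\cF(E,F)\subseteq Z\), each \(Q_n\) maps \(Z\) into \(Z\).
\item \(Q_n^2=Q_n\), because \(P_n^2=P_n\).
\item \(\norm{Q_n}\leq\lambda\), so condition 1 of Lemma \ref{lem:abstract-covering} holds with \(M=\lambda\).
\end{itemize}

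\emph{Step 3: the conditions of Lemma \ref{lem:abstract-covering}.}
\begin{itemize}
\item \emph{Condition 2.} For \(T\in S_Z\) and \(\varepsilon>0\), pick \(x\in B_E\) with \(\norm{Tx}>1-\varepsilon\). Then \(\liminf_n\norm{P_nT}\geq\lim_n\norm{P_nTx}>1-\varepsilon\). This argument does not use \(\norm{P_n}\leq1\).
\item \emph{Condition 3.} For \(s\in[s_0,\lambda]\),
\[
\norm{(I_Z-\tfrac{a}{s}Q_n)T}=\norm{(I_F-\tfrac{a}{s}P_n)T}\leq a\theta\norm{T}.
\]
\item \emph{Condition 4.} This is identical to the metric case. \(Q_nZ\subseteq\cB(E,P_nF)\), and \(\cB(E,P_nF)\) is topologically isomorphic to \((E^*)^{d_n}\), which is separable. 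Hence each \(Q_nZ\) is separable, and the countable union \(\bigcup_n S_{Q_nZ}\) has a countable dense subset.
\end{itemize}
Lemma \ref{lem:abstract-covering} then gives the UBCP for \(Z\).

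\emph{Main obstacle.} The only genuinely new point compared with the metric case is that \(\norm{Q_nT}\) may now lie anywhere in \((s_0,\lambda]\), not just in \((s_0,1]\). So the estimate of Lemma \ref{lem:rescaled-projection} must hold uniformly for \(s\) up to \(\lambda\). That lemma was set up exactly for this through the term \(1-1/\lambda\) in \(\theta\). The only check needed is that \(\theta<1\), which follows from the choice \(s_0>\lambda/(\lambda+1-\mu)\) together with \(\lambda(1-1/\lambda)=\lambda-1=h<1-\mu\). This last inequality holds because \(\mu\geq\lambda-t(1-\eta)\) forces \(1-\mu\leq t(1-\eta)-h\), and admissibility gives \(h<t(1-\eta)/2\), hence \(t(1-\eta)-h>h\). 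I would verify this inequality explicitly before relying on Lemma \ref{lem:rescaled-projection}, because it is where the hypothesis \(\lambda<\Lambda_F\) is actually used.
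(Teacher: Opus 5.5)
Your proposal is correct and follows the paper's proof essentially step for step. It uses the same admissible triple, Lemma~\ref{lem:rescaled-projection} on \([s_0,\lambda]\), the maps \(Q_nT=P_nT\), and Lemma~\ref{lem:abstract-covering} with \(M=\lambda\).

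Your closing verification of \(h<1-\mu\), however, is a non sequitur. From \(\mu\geq\lambda-t(1-\eta)\) you get only the upper bound \(1-\mu\leq t(1-\eta)-h\). Knowing that this upper bound exceeds \(h\) tells you nothing about whether \(h<1-\mu\).

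The correct argument uses both branches of the maximum: \(1-\mu=\min\{\Delta/2-t,\ t(1-\eta)-h\}\). Admissibility gives \(h<\Delta/2-t\) and \(2h<t(1-\eta)\), so \(h\) lies below both terms.

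This inequality is already part of Lemma~\ref{lem:rescaled-projection}, which you are entitled to cite. So the slip affects only your side remark, not the proof of the theorem.
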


\begin{proof}
	Since \(\lambda<\Lambda_F\), choose \(0<\eta<1\) and \(0<t<\delta_F(\eta)/4\) such that
	\(\lambda-1<\alpha_F(\eta,t)\). By Lemma \ref{lem:rescaled-projection}, there exist \(a=1/t\),
	\(0<s_0<1\), and \(0<\theta<1\) such that for every projection \(P:F\to F\) with \(\norm{P}\leq\lambda\),
	and every \(s\in[s_0,\lambda]\), \(\norm{I_F-\frac{a}{s}P}\leq a\theta\).

	By Remark \ref{rem:net-to-sequence}, choose finite-rank projections \((P_n)_{n=1}^{\infty}\) such that
	\(\norm{P_n}\leq\lambda\) and \(P_ny\to y\) for all \(y\in F\). For \(T\in Z\), define \(Q_n(T)=P_n \circ T\).
	Since \(P_nT\) has finite rank and \(\cF(E,F)\subseteq Z\), the map \(Q_n\) is a well-defined projection on
	\(Z\), and \(\norm{Q_n}\leq\norm{P_n}\leq\lambda\).

	We verify the assumptions of Lemma \ref{lem:abstract-covering} with \(M=\lambda\). Let \(T\in S_Z\).
	Given \(\varepsilon>0\), choose \(x\in B_E\) such that \(\norm{Tx}>1-\varepsilon\).
	Since \(P_ny\to y\) for every \(y\in F\), we have \(P_nTx\to Tx\).
	Thus
	\begin{align*}
		\liminf_{n\to\infty}\norm{Q_nT} &= \liminf_{n\to\infty}\norm{P_nT} 
		\geq \lim_{n\to\infty}\norm{P_nTx} = \norm{Tx} > 1-\varepsilon.
	\end{align*}
	Letting \(\varepsilon\downarrow0\), we obtain \(\liminf_{n\to\infty}\norm{Q_nT}\geq1\).

	For \(s\in[s_0,\lambda]\), we have
	\begin{align*}
		\norm{I_Z-\frac{a}{s}Q_n} &= \sup_{\norm{T}\leq1} \norm{T-\frac{a}{s}P_nT} \\
		&= \sup_{\norm{T}\leq1} \norm{\left(I_F-\frac{a}{s}P_n\right)T} \\
		&\leq \norm{I_F-\frac{a}{s}P_n} \leq a\theta.
	\end{align*}

	It remains to verify the countability condition. For every \(n\), \(Q_nZ\subseteq \cB(E,P_nF)\).
	Since \(P_nF\) is finite dimensional and \(E^*\) is separable, \(\cB(E,P_nF)\) is separable.
	Hence \(\bigcup_{n=1}^{\infty}S_{Q_nZ}\) has a countable norm dense subset.
	Lemma \ref{lem:abstract-covering} implies that \(Z\) has the UBCP.
\end{proof}

We record the corresponding consequences for algebras of operators.

\begin{corollary}\label{cor:operator-algebra}
	Let \(X\) be a separable uniformly convex Banach space with the \(\pi_\lambda\)-property, where
	\(1\leq\lambda<\Lambda_X\). If \(Z\) is a closed subspace of \(\cB(X)\) containing \(\cF(X)\), then \(Z\)
	has the UBCP.
\end{corollary}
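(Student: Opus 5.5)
The plan is to obtain Corollary \ref{cor:operator-algebra} as the special case \(E=F=X\) of Theorem \ref{thm:operator-space}. In that case \(\cB(X)=\cB(X,X)\) and \(\cF(X)=\cF(X,X)\), so the hypotheses on \(Z\) carry over verbatim. The hypotheses on the range space are also unchanged: \(F=X\) is separable, uniformly convex, and has the \(\pi_\lambda\)-property with \(1\leq\lambda<\Lambda_X=\Lambda_F\). So the only thing left to check is the domain hypothesis, namely that \(E^*=X^*\) is separable.

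For this I argue as follows. Since \(X\) is uniformly convex, the Milman--Pettis theorem (recalled in the introduction) shows that \(X\) is reflexive. Hence \(X^{**}\) is isometrically isomorphic to \(X\) and is therefore separable. A Banach space whose dual is separable is itself separable. Applying this to \(X^*\), whose dual \(X^{**}\) is separable, gives that \(X^*\) is separable.

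With \(E^*\) separable, Theorem \ref{thm:operator-space} applies directly and yields that \(Z\) has the UBCP. There is no real obstacle here. The one step that needs a moment's care is the separability of \(X^*\). It uses reflexivity in an essential way, because separability of a space does not in general pass to its dual: \(\ell_1\) is separable but its dual \(\ell_\infty\) is not.
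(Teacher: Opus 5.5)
Your proposal is correct and matches the paper's proof: apply Theorem~\ref{thm:operator-space} with \(E=F=X\), and get separability of \(X^*\) from reflexivity via the Milman--Pettis theorem. Your extra justification, that \(X^{**}\cong X\) is separable and a space with separable dual is separable, simply spells out the step the paper states in one line without proof.
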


\begin{proof}
	Since \(X\) is uniformly convex, by Milman--Pettis theorem, \(X\) is reflexive (see \cite{M1998}).
	Since \(X\) is separable and reflexive, \(X^*\) is separable.
	Applying Theorem \ref{thm:operator-space} with \(E=F=X\), we obtain the result.
\end{proof}

\begin{corollary}\label{cor:metric-pi}
	Let \(X\) be a separable uniformly convex Banach space with the metric \(\pi\)-property.
	If \(Z\) is a closed subspace of \(\cB(X)\) containing \(\cF(X)\), then \(Z\) has the UBCP.
\end{corollary}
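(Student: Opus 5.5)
This corollary is the special case \(\lambda=1\) of Corollary~\ref{cor:operator-algebra}, so the plan is simply to reduce to that result. The metric \(\pi\)-property is by definition the \(\pi_1\)-property. Hence it suffices to check that \(\lambda=1\) is admissible, i.e.\ that \(1<\Lambda_X\).

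For this, recall that \(X\) is uniformly convex, so \(\delta_X(\eta)>0\) for every \(0<\eta<1\). Fix such an \(\eta\), set \(\Delta=\delta_X(\eta)\), and pick any \(0<t<\Delta/4\). Then
\[
\alpha_X(\eta,t)=\min\left\{\frac{\Delta}{4},\ \frac{\Delta}{2}-t,\ \frac{t(1-\eta)}{2}\right\}>0,
\]
because each of the three terms is positive. Consequently \(\Lambda_X\ge 1+\alpha_X(\eta,t)>1\), which is exactly the remark \(\Lambda_F>1\) made after the definition of \(\Lambda_F\). So \(1\le\lambda=1<\Lambda_X\).

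With this in hand, Corollary~\ref{cor:operator-algebra} applies directly. Alternatively, one can argue as follows: by Milman--Pettis, \(X\) is reflexive; being also separable, \(X\) has separable dual \(X^*\). Theorem~\ref{thm:operator-space-metric} with \(E=F=X\) then gives that every closed subspace \(Z\subseteq\cB(X)\) containing \(\cF(X)\) has the UBCP.

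There is no real obstacle here. The only point needing a line of justification is that \(\Lambda_X>1\), or equivalently the separability of \(X^*\) if one goes through Theorem~\ref{thm:operator-space-metric}; both are already established earlier in the paper.
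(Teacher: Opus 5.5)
Your proposal is correct and matches the paper: the corollary is stated without a separate proof, as the case \(\lambda=1\) of Corollary~\ref{cor:operator-algebra} (using \(\Lambda_X>1\)). Your alternative route is equally valid: Milman--Pettis gives \(X^*\) separable, and Theorem~\ref{thm:operator-space-metric} then applies with \(E=F=X\), exactly as in the proof of Corollary~\ref{cor:separable-lp-range}.
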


\section{Applications to \(L_p\)-type spaces}\label{sec:lp-examples}

		We record the \(L_p[0,1]\) consequences for closed operator subspaces containing the
		finite-rank operators. Recently, the UBCP of \(\cB(L_p[0,1])\) was presented in \cite{langemets2026dualbanachspacesballcovering} and \cite{mishra2026ballcoveringpropertymathbblmathbbx}.

	\begin{corollary}\label{cor:lp-operator}
		Let \(1<p<\infty\) and put \(Y=L_p[0,1]\). If \(Z\) is a closed subspace of \(\cB(Y)\) containing
		\(\cF(Y)\), then \(Z\) has the UBCP. In particular, \(\cB(L_p[0,1])\) has the UBCP.
	\end{corollary}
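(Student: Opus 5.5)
The plan is to apply Corollary~\ref{cor:metric-pi} (equivalently Theorem~\ref{thm:operator-space-metric} with \(E=F=Y\)) to \(X=Y=L_p[0,1]\). Three hypotheses need checking: \(Y\) is separable, \(Y\) is uniformly convex, and \(Y\) has the metric \(\pi\)-property.

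\textbf{Separability and uniform convexity.} Separability of \(L_p[0,1]\) for \(1\le p<\infty\) is standard; for instance, step functions with rational values on dyadic intervals form a countable dense set. Uniform convexity for \(1<p<\infty\) is Clarkson's theorem \cite{C1936}. It follows from the Clarkson inequalities for \(p\ge2\) and from Hanner's inequalities \cite{H1956} (or Clarkson's second inequality) for \(1<p<2\). In either case \(\delta_Y(\varepsilon)>0\) for all \(0<\varepsilon\le2\).

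\textbf{Metric \(\pi\)-property.} I use the conditional expectations onto dyadic \(\sigma\)-algebras. Let \(\mathcal D_n\) be the \(\sigma\)-algebra generated by the intervals \([(k-1)2^{-n},k2^{-n})\), \(1\le k\le 2^n\), and define
\[
P_nf=\sum_{k=1}^{2^n}\Bigl(2^n\int_{(k-1)2^{-n}}^{k2^{-n}}f\Bigr)\mathbbm{1}_{[(k-1)2^{-n},k2^{-n})}.
\]
Each \(P_n\) is a projection of rank \(2^n\). By Jensen's (or H\"older's) inequality on each interval, \(\norm{P_nf}_p\le\norm{f}_p\), so \(\norm{P_n}\le1\).

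Next, \(P_nf\to f\) for every \(f\in Y\). This holds for continuous \(f\) by uniform continuity. Since continuous functions are dense in \(L_p\) and \(\sup_n\norm{P_n}\le1\), the same three-term estimate as in Remark~\ref{rem:net-to-sequence} gives convergence for all \(f\). Given a finite set \(A\subseteq Y\) and \(\varepsilon>0\), choosing \(n\) large then yields the required projection. So \(Y\) has the metric \(\pi\)-property.

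\textbf{Conclusion.} Corollary~\ref{cor:metric-pi} gives the UBCP for every closed subspace \(Z\subseteq\cB(Y)\) containing \(\cF(Y)\). Taking \(Z=\cB(Y)\), which is closed and contains \(\cF(Y)\), gives the final assertion.

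The only step with real content is the metric \(\pi\)-property, which rests on the contractivity and strong convergence of the dyadic conditional expectations. Both are classical, so I expect no genuine obstacle.
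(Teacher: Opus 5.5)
Your proposal is correct and follows the paper's proof essentially step for step. You check separability and uniform convexity of \(L_p[0,1]\), get the metric \(\pi\)-property from the contractive dyadic conditional expectations, and apply Corollary~\ref{cor:metric-pi}. The one minor difference is that you prove \(P_nf\to f\) directly, using density of continuous functions and \(\sup_n\norm{P_n}\leq1\), where the paper cites the martingale convergence theorem.
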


	\begin{proof}
		The space \(L_p[0,1]\) is separable and uniformly convex for \(1<p<\infty\).
		More precisely, Hanner's formula gives the following modulus of convexity \cite{H1956}.
		If \(2\leq p<\infty\), then
		\[ \delta_{L_p}(\varepsilon) = 1-\left(1-\left(\frac{\varepsilon}{2}\right)^p\right)^{1/p} \qquad (0\leq\varepsilon\leq2). \]
		If \(1<p\leq2\), then \(\delta_{L_p}(\varepsilon)\) is the unique \(\delta\in[0,1]\) determined by
		\[ \left(1-\delta+\frac{\varepsilon}{2}\right)^p + \abs{1-\delta-\frac{\varepsilon}{2}}^p = 2. \]
		See also \cite{C1936,M1998}. Let \((\mathcal{D}_m)_{m=1}^{\infty}\) be the dyadic sub-\(\sigma\)-algebras
		of \([0,1]\), and let \(E_m:L_p[0,1]\to L_p[0,1]\) be the corresponding conditional expectations.
		Conditional expectations are contractive projections on \(L_p\), and the dyadic conditional expectations
		converge in \(L_p\) to the identity by the standard martingale convergence theorem, see \cite{F1999}.
		Moreover, \(E_mL_p[0,1]\) is the finite-dimensional space of functions which are constant on the dyadic
		intervals of level \(m\). Thus \(\norm{E_m}=1\), \(E_mf\to f\) for \(f\in L_p[0,1]\), and every \(E_m\)
		has finite-dimensional range. Hence \(L_p[0,1]\) has the metric \(\pi\)-property.
		Corollary \ref{cor:metric-pi} implies the conclusion.
	\end{proof}

	\begin{corollary}\label{cor:lp-range}
		Let \(1<p<\infty\), and let \(E\) be a Banach space such that \(E^*\) is separable.
		If \(Z\) is a closed subspace of \(\cB(E,L_p[0,1])\) containing \(\cF(E,L_p[0,1])\), then \(Z\) has the
		UBCP.
	\end{corollary}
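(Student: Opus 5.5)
This is a direct application of Theorem~\ref{thm:operator-space-metric}, or of Theorem~\ref{thm:operator-space} with \(\lambda=1\), taking the range space \(F=L_p[0,1]\). So the plan is simply to check the hypotheses on \(F\); nothing is required of \(E\) beyond the standing assumption that \(E^*\) is separable.

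First, \(F=L_p[0,1]\) is separable for \(1<p<\infty\), since step functions with rational values on dyadic intervals form a countable dense set. It is also uniformly convex, either by Clarkson's inequalities or by Hanner's formula for \(\delta_{L_p}\) as recalled in the proof of Corollary~\ref{cor:lp-operator}.

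Second, I will verify the metric \(\pi\)-property exactly as in the proof of Corollary~\ref{cor:lp-operator}. The dyadic conditional expectations \(E_m\) are contractive projections with finite-dimensional range, namely the functions constant on the \(2^m\) dyadic intervals of level \(m\). Moreover \(E_mf\to f\) in \(L_p\) for every \(f\). Density of step functions gives this convergence directly, because \(E_m\) fixes dyadic step functions of level at most \(m\) and \(\norm{E_m}\le1\); martingale convergence is not needed. Given a finite set \(A\) and \(\varepsilon>0\), taking \(m\) large yields the projection required in the definition. Hence \(F\) has the \(\pi_1\)-property, and \(\lambda=1<\Lambda_F\) holds automatically since \(\Lambda_F>1\).

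With these hypotheses verified, Theorem~\ref{thm:operator-space-metric} applied with \(F=L_p[0,1]\) and the given \(E\) shows that every closed subspace \(Z\subseteq\cB(E,L_p[0,1])\) containing \(\cF(E,L_p[0,1])\) has the UBCP. There is no real obstacle. The only point needing any care is the convergence \(E_mf\to f\), and the density-plus-uniform-boundedness argument above (as in Remark~\ref{rem:net-to-sequence}) settles it.
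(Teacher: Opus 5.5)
Your proposal is correct and matches the paper's proof: the dyadic conditional expectations give \(L_p[0,1]\) the metric \(\pi\)-property, and the result then follows from Theorem~\ref{thm:operator-space} with \(\lambda=1\), equivalently Theorem~\ref{thm:operator-space-metric}. The only difference is minor: you prove \(E_mf\to f\) from density of dyadic step functions and \(\norm{E_m}\le1\), where the paper cites martingale convergence, and both arguments are valid.
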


	\begin{proof}
		As in the proof of Corollary \ref{cor:lp-operator}, the dyadic conditional expectations show that the
		separable uniformly convex space \(L_p[0,1]\) has the metric \(\pi\)-property.
		The result follows from Theorem \ref{thm:operator-space}.
	\end{proof}

	We also need the following separable \(L_p(\mu)\) variant.

	\begin{lemma}\label{lem:separable-lp-metric-pi}
		Let \(1<p<\infty\), and let \(Y=L_p(\mu)\) be separable. Then \(Y\) is uniformly convex and has the
		metric \(\pi\)-property.
	\end{lemma}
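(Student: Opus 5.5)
The plan has two parts: uniform convexity, which is classical, and a conditional-expectation construction for the metric \(\pi\)-property.

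For uniform convexity I will simply invoke Clarkson's inequalities, or Hanner's formula as recorded in the proof of Corollary~\ref{cor:lp-operator}. The Clarkson and Hanner inequalities hold in \(L_p(\mu)\) for an arbitrary measure \(\mu\), so \(\delta_Y(\varepsilon)>0\) for \(0<\varepsilon\leq2\). If \(Y\) is finite dimensional, it is trivially covered.

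For the metric \(\pi\)-property, I will build an increasing sequence of finite \(\sigma\)-algebras generated by finite-measure sets, and use averaging projections in place of the dyadic ones. Since \(Y\) is separable, I pick a dense sequence \((f_k)\) in \(Y\). Each \(f_k\) can be approximated in \(L_p\) by simple functions \(\sum c_i\chi_{A_i}\) with \(\mu(A_i)<\infty\). This gives a countable family \(\mathcal A=\{A_1,A_2,\dots\}\) of sets of finite positive measure whose indicators have dense span in \(Y\).

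Let \(\pi_m\) be the finite partition of \(A_1\cup\dots\cup A_m\) generated by \(A_1,\dots,A_m\), discarding null atoms. Define
\[ P_mf=\sum_{B\in\pi_m}\Big(\frac{1}{\mu(B)}\int_B f\,d\mu\Big)\chi_B . \]
Each \(P_m\) is a finite-rank projection. By Jensen's (or H\"older's) inequality on each atom,
\[ \Big|\frac1{\mu(B)}\int_B f\Big|^p\mu(B)\leq\int_B|f|^p , \]
and summing over \(B\) gives \(\norm{P_mf}\leq\norm{f}\), so \(\norm{P_m}\leq1\). If a partition is needed instead of a nested one, note that \(\pi_{m+1}\) refines \(\pi_m\) on \(A_1\cup\dots\cup A_m\).

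For convergence, \(P_m\chi_{A_j}=\chi_{A_j}\) once \(m\geq j\), since \(A_j\) is a union of atoms of \(\pi_m\). Hence \(P_mg\to g\) on the dense span of \(\{\chi_{A_j}\}\). The uniform bound \(\norm{P_m}\leq1\) then gives \(P_mf\to f\) for all \(f\), by the same \(\varepsilon/3\) argument as in Remark~\ref{rem:net-to-sequence}. This yields the \(\pi_1\)-property: given a finite set \(A\) and \(\varepsilon>0\), take \(m\) large.

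The main obstacle is the density step. I must make sure that separability of \(L_p(\mu)\) gives a countable family of finite-measure sets whose indicators span a dense subspace, without assuming \(\sigma\)-finiteness of \(\mu\). This works because each \(f_k\) has \(\sigma\)-finite support and simple functions supported on finite-measure sets are dense in \(L_p\) for \(p<\infty\). With that, everything reduces to the dense-sequence argument above.
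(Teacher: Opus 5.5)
Your proposal is correct and follows essentially the same route as the paper. Both arguments use averaging projections over finite partitions into sets of finite positive measure, built from a countable dense family of simple functions. Contractivity comes from Jensen, the dense family is fixed exactly for large index, and the bound \(\norm{P_m}\leq 1\) then gives convergence on all of \(Y\). The only cosmetic difference is that you generate the partitions from the sets \(A_i\) supporting the simple functions, whereas the paper uses their level sets.
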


	\begin{proof}
		Uniform convexity follows from the Clarkson--Hanner inequalities, as in the proof of
		Corollary \ref{cor:lp-operator}. It remains to verify the metric \(\pi\)-property. Since \(Y\) is
		separable, choose a dense sequence \((s_j)_{j=1}^{\infty}\) of simple functions with finite-measure
		supports. For each \(n\), let \(\mathcal P_n\) be the finite measurable partition generated by the
		level sets of \(s_1,\ldots,s_n\), omitting null sets and the possible complement of their union. Every
		\(A\in\mathcal P_n\) has finite positive measure. Define
		\[ P_nf=\sum_{A\in\mathcal P_n}\frac{1}{\mu(A)}\left(\int_A f\,\rmd\mu\right)\one_A \qquad (f\in L_p(\mu)). \]
		By Jensen's inequality on each atom, \(\norm{P_nf}_p\leq\norm{f}_p\). Thus \(P_n\) is a contractive
		finite-rank projection. Moreover, \(P_n s_j=s_j\) whenever \(j\leq n\). If \(f\in L_p(\mu)\) and
		\(s_j\) is chosen so that \(\norm{f-s_j}_p<\varepsilon\), then for every \(n\geq j\),
		\(\norm{P_nf-f}_p\leq\norm{P_n(f-s_j)}_p+\norm{s_j-f}_p\leq 2\varepsilon\).
		Hence \(P_nf\to f\) for every \(f\in L_p(\mu)\), and \(Y\) has the metric \(\pi\)-property.
	\end{proof}

	\begin{corollary}\label{cor:separable-lp-range}
		Let \(1<p<\infty\), let \(Y=L_p(\mu)\) be separable, and let \(E\) be a Banach space such that
		\(E^*\) is separable. If \(Z\) is a closed subspace of \(\cB(E,Y)\) containing \(\cF(E,Y)\), then
		\(Z\) has the UBCP. In particular, every closed subspace of \(\cB(Y)\) containing \(\cF(Y)\) has the
		UBCP, and hence \(\cB(Y)\) has the UBCP.
	\end{corollary}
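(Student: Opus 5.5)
The corollary follows by combining Lemma~\ref{lem:separable-lp-metric-pi} with Theorem~\ref{thm:operator-space} in the metric case \(\lambda=1\); Theorem~\ref{thm:operator-space-metric} can be used directly as well.

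First, by Lemma~\ref{lem:separable-lp-metric-pi}, the separable space \(Y=L_p(\mu)\) with \(1<p<\infty\) is uniformly convex and has the metric \(\pi\)-property. Concretely, the contractive averaging projections \(P_n\) onto the finite partitions \(\mathcal P_n\) satisfy \(P_nf\to f\) for every \(f\in Y\). Since \(\Lambda_Y>1\) by construction, the value \(\lambda=1\) satisfies \(1\leq\lambda<\Lambda_Y\). Thus \(Y\) meets all the hypotheses on the range space in Theorem~\ref{thm:operator-space}, and equivalently in Theorem~\ref{thm:operator-space-metric}.

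Next, let \(E\) be a Banach space with \(E^*\) separable, and let \(Z\subseteq\cB(E,Y)\) be closed with \(\cF(E,Y)\subseteq Z\). Applying Theorem~\ref{thm:operator-space-metric} with \(F=Y\) shows that \(Z\) has the UBCP. This proves the first assertion.

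For the ``in particular'' part, take \(E=Y\). Since \(Y\) is uniformly convex, it is reflexive by the Milman--Pettis theorem. A separable reflexive space has separable dual, so \(Y^*\) is separable. The first assertion then applies to every closed subspace of \(\cB(Y)\) containing \(\cF(Y)\); this is also exactly Corollary~\ref{cor:operator-algebra} with \(X=Y\) and \(\lambda=1\). Finally, \(Z=\cB(Y)\) is itself such a subspace, so \(\cB(Y)\) has the UBCP.

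All the substantive work sits in Lemma~\ref{lem:separable-lp-metric-pi} and in the operator-space theorem. The only point needing care is the passage from separability and reflexivity of \(Y\) to separability of \(Y^*\).
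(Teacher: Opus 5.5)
Your proposal is correct and follows the paper's proof. The paper likewise combines Lemma~\ref{lem:separable-lp-metric-pi} with Theorem~\ref{thm:operator-space-metric} for the first assertion, then uses Milman--Pettis (separable reflexive \(Y\) has separable \(Y^*\)) and takes \(E=Y\) for the algebra statement.
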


	\begin{proof}
		By Lemma \ref{lem:separable-lp-metric-pi}, the space \(Y\) is separable, uniformly convex, and has the
		metric \(\pi\)-property. The first assertion follows from Theorem \ref{thm:operator-space-metric}.
		Since \(Y\) is separable and uniformly convex, it is reflexive by the Milman--Pettis theorem, and hence
		\(Y^*\) is separable. Applying the first assertion with \(E=Y\) gives the algebra statement.
	\end{proof}

	\begin{corollary}\label{cor:vector-lp}
		Let \(1<p<\infty\), and let \(X\) be a separable uniformly convex Banach space with the
		\(\pi_\lambda\)-property. If \(1\leq\lambda<\Lambda_{L_p([0,1];X)}\), then for every Banach space \(E\)
		with \(E^*\) separable and every closed subspace \(Z\subseteq \cB(E,L_p([0,1];X))\) containing
		\(\cF(E,L_p([0,1];X))\), the space \(Z\) has the UBCP. In particular, if \(X\) has the metric
		\(\pi\)-property, then the same conclusion holds.
	\end{corollary}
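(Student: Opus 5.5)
The plan is to deduce Corollary~\ref{cor:vector-lp} from Theorem~\ref{thm:operator-space} applied with \(F=Y:=L_p([0,1];X)\). To do this, I will check three properties of \(Y\): it is separable, it is uniformly convex, and it has the \(\pi_\lambda\)-property with the same \(\lambda\) that \(X\) has.

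The first two properties are standard. Separability follows because simple functions with values in a countable dense subset of \(X\) and supported on dyadic intervals are dense in \(Y\). Uniform convexity of \(L_p([0,1];X)\) for uniformly convex \(X\) and \(1<p<\infty\) is the classical theorem of Day (also McShane/Figiel). I would cite it rather than reprove it. Once \(Y\) is uniformly convex, \(\Lambda_Y>1\) is defined, so the hypothesis \(\lambda<\Lambda_Y\) makes sense.

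The main step is the \(\pi_\lambda\)-property for \(Y\). By Remark~\ref{rem:net-to-sequence}, I take finite-rank projections \(R_n\) on \(X\) with \(\norm{R_n}\leq\lambda\) and \(R_nx\to x\). Let \(E_m\) be the dyadic conditional expectation acting on \(Y\); since \(E_m\) averages over dyadic intervals, it is contractive on \(L_p([0,1];X)\) by the vector Jensen (triangle) inequality. I then set \(P_n=(I\otimes R_n)E_n\), that is, \((P_nf)(\omega)=R_n\bigl((E_nf)(\omega)\bigr)\).
\begin{itemize}
\item \(P_n\) is a projection, because \(R_n\) acts pointwise and commutes with averaging, so \(E_n(I\otimes R_n)=(I\otimes R_n)E_n\).
\item Its range consists of functions constant on level-\(n\) dyadic intervals with values in \(R_nX\), so it is finite dimensional.
\item Its norm satisfies \(\norm{P_n}\leq\lambda\), since pointwise application of \(R_n\) has norm at most \(\norm{R_n}\) on \(L_p\).
\end{itemize}

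For convergence, the family \((P_n)\) is uniformly bounded, so it suffices to check \(P_nf\to f\) on a dense set. I take simple functions \(f=\sum_k \one_{I_k}x_k\) with \(I_k\) dyadic intervals. For \(n\) large, \(E_nf=f\), and then \(P_nf=\sum_k\one_{I_k}R_nx_k\to f\). The finite-set formulation of the \(\pi_\lambda\)-property then follows as in Remark~\ref{rem:net-to-sequence}.

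With these three properties in hand, Theorem~\ref{thm:operator-space} gives the UBCP for every closed \(Z\subseteq\cB(E,Y)\) containing \(\cF(E,Y)\). The metric case \(\lambda=1\) is automatic, since \(1<\Lambda_Y\) always holds. I expect the only real obstacle to be the uniform convexity of the Bochner space, which I intend to take from the literature; the projection construction is routine.
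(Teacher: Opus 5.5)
Your proposal is correct and follows the paper's proof essentially verbatim: you cite uniform convexity of the Bochner space, use the projections \(f\mapsto R_n(E_nf(\cdot))\) built from the dyadic conditional expectations and the \(\pi_\lambda\)-projections of \(X\), and then apply Theorem~\ref{thm:operator-space}. The only cosmetic difference is how you prove \(P_nf\to f\): you check it on dyadic \(X\)-valued step functions and extend by uniform boundedness, whereas the paper uses \(\norm{Q_nf-f}_p\leq\lambda\norm{E_nf-f}_p+\norm{R_nf-f}_p\) together with dominated convergence.
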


	\begin{proof}
		The Bochner space \(L_p([0,1];X)\) is uniformly convex whenever \(1<p<\infty\) and \(X\) is uniformly
		convex (see \cite{LT1979}). Since \(X\) is separable, \(L_p([0,1];X)\) is separable.
		By Remark \ref{rem:net-to-sequence}, choose finite-rank projections \((R_n)\) on \(X\) with
		\(\norm{R_n}\leq\lambda\) and \(R_nx\to x\) for every \(x\in X\).
		Let \(E_n\) be the dyadic conditional expectations on \(L_p([0,1];X)\), and define
		\(Q_n(f)(t)=R_n(E_nf(t))\) for \(f\in L_p([0,1];X)\).
		Since bounded operators on \(X\) commute with Bochner conditional expectations, \(Q_n\) is a projection
		and \(\norm{Q_n}\leq\lambda\). Its range consists of functions which are constant on finitely many dyadic
		intervals and take values in the finite-dimensional space \(R_nX\); hence \(Q_n\) has finite rank.
		Moreover, \(\norm{Q_nf-f}_p \leq \lambda\norm{E_nf-f}_p+\norm{R_nf-f}_p \to0\).
		Here \(R_nf\) means pointwise application of \(R_n\) to \(f\).
		The convergence \(\norm{R_nf-f}_p\to0\) follows from dominated convergence and
		\(\sup_n\norm{R_n}\leq\lambda\). The convergence \(E_nf\to f\) follows from the density of \(X\)-valued
		dyadic step functions in \(L_p([0,1];X)\) and the contractivity of the conditional expectations.
		Thus \(L_p([0,1];X)\) has the \(\pi_\lambda\)-property. The first assertion follows from Theorem
		\ref{thm:operator-space}. For the metric case we take \(\lambda=1\), and then
		\(1<\Lambda_{L_p([0,1];X)}\).
	\end{proof}

	The same argument gives a separable \(L_p(\mu)\)-variant of the vector-valued example.

	\begin{lemma}\label{lem:separable-vector-lp-pilambda}
		Let \(1<p<\infty\), let \(X\) be a separable Banach space with the \(\pi_\lambda\)-property, and put
		\(Y=L_p(\mu;X)\). Assume that \(Y\) is separable. Then \(Y\) has the \(\pi_\lambda\)-property.
	\end{lemma}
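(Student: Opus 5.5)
The plan combines the proof of Lemma~\ref{lem:separable-lp-metric-pi} with that of Corollary~\ref{cor:vector-lp}. By Remark~\ref{rem:net-to-sequence}, applied to the separable space \(X\), I will fix finite-rank projections \((R_n)\) on \(X\) with \(\norm{R_n}\leq\lambda\) and \(R_nx\to x\) for all \(x\in X\).

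For the scalar part, I use that \(Y=L_p(\mu;X)\) is separable. The simple functions \(\sum_i \one_{A_i}x_i\), with \(\mu(A_i)<\infty\), are dense in \(Y\). So I choose a dense sequence \((s_j)\) of such \(X\)-valued simple functions. Each \(s_j\) has finitely many level sets, and these have finite measure apart from the zero set. Let \(\mathcal P_n\) be the finite partition generated by the level sets of \(s_1,\dots,s_n\), discarding null sets and the complement of the union of supports. Define the Bochner averaging operator
\[ E_nf=\sum_{A\in\mathcal P_n}\frac{1}{\mu(A)}\Bigl(\int_A f\,\rmd\mu\Bigr)\one_A . \]
This is a projection, and it satisfies \(E_ns_j=s_j\) for \(j\leq n\). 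It is contractive on \(Y\): on each atom, \(\norm{\frac1{\mu(A)}\int_A f}\leq\frac1{\mu(A)}\int_A\norm{f}\), and Jensen's inequality gives \(\mu(A)\norm{\frac1{\mu(A)}\int_A f}^p\leq\int_A\norm{f}^p\). The same \(\varepsilon/2\) argument as in Lemma~\ref{lem:separable-lp-metric-pi} then gives \(E_nf\to f\) for all \(f\in Y\).

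Next I set \(Q_nf=R_n\circ(E_nf)\), where \(R_n\) acts pointwise. Since \(R_n\) is bounded, it commutes with the Bochner integral, so \(R_nE_n=E_nR_n\). Hence \(Q_n^2=R_n^2E_n^2=Q_n\), and \(\norm{Q_n}\leq\norm{R_n}\norm{E_n}\leq\lambda\); pointwise application of \(R_n\) has norm at most \(\norm{R_n}\) on \(Y\). The range of \(Q_n\) lies in the span of \(\one_A\otimes R_nX\) for \(A\in\mathcal P_n\), which is finite dimensional. For convergence,
\[ \norm{Q_nf-f}\leq\norm{R_n(E_nf-f)}+\norm{R_nf-f}\leq\lambda\norm{E_nf-f}+\norm{R_nf-f}. \]
The first term tends to \(0\) by the scalar part. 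The second tends to \(0\) by dominated convergence, since \(\norm{R_nf(\omega)-f(\omega)}^p\to0\) pointwise and is bounded by \((\lambda+1)^p\norm{f(\omega)}^p\). By the converse in Remark~\ref{rem:net-to-sequence}, \(Y\) has the \(\pi_\lambda\)-property.

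The main obstacle is the density step: obtaining a countable dense family of \(X\)-valued simple functions with finite-measure supports, and checking that the partitions they generate make \(E_n\) fix them. I expect this to follow from separability of \(Y\) together with the density of finite-measure simple functions in a Bochner space: any dense sequence can be approximated by such simple functions, which then form a dense sequence themselves. A secondary point is measurability of pointwise \(R_nf\), which holds because \(R_n\) is continuous and \(f\) is strongly measurable.
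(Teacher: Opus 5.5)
Your proposal is correct and follows the paper's proof closely. It uses the same projections $R_n$, the same partitions generated by a dense sequence of finite-measure simple functions, the same Bochner averaging operators, and the same $Q_n=R_nE_n$. The only real difference is in the convergence step. You split $Q_nf-f=R_n(E_nf-f)+(R_nf-f)$ and use dominated convergence, as the paper does in its proof of Corollary~\ref{cor:vector-lp}. The paper instead splits through a simple function $u_j$, so it only needs $\norm{R_nu_j-u_j}_p\to0$ for a finitely-valued $u_j$. Both arguments work.
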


	\begin{proof}
		By Remark \ref{rem:net-to-sequence}, choose finite-rank projections \((R_n)\) on \(X\) such that
		\(\norm{R_n}\leq\lambda\) and \(R_nx\to x\) for every \(x\in X\). Since \(Y\) is separable, choose a
		dense sequence \((u_j)_{j=1}^{\infty}\) of \(X\)-valued simple functions with finite-measure supports.
		For each \(n\), let \(\mathcal P_n\) be the finite measurable partition generated by the level sets of
		\(u_1,\ldots,u_n\), omitting null sets and the possible complement of their union. Every
		\(A\in\mathcal P_n\) has finite positive measure. Define the averaging operator \(A_n:Y\to Y\) by
		\[ A_nf=\sum_{A\in\mathcal P_n}\frac{1}{\mu(A)}\left(\int_A f\,\rmd\mu\right)\one_A. \]
		Here the integral is the Bochner integral. Jensen's inequality gives \(\norm{A_nf}_p\leq\norm{f}_p\).
		Moreover, \(A_nu_j=u_j\) whenever \(j\leq n\).

		Define \(Q_n:Y\to Y\) by \(Q_nf(t)=R_n(A_nf(t))\). Then \(Q_n\) is a projection, because
		\(A_n\) fixes \(\mathcal P_n\)-step functions and \(R_n\) is a projection. Moreover, \(Q_n\) has
		finite rank and \(\norm{Q_n}\leq\lambda\). Indeed, its range consists of functions which are constant on finitely many
		sets and take values in the finite-dimensional space \(R_nX\). Let \(f\in Y\) and \(\varepsilon>0\).
		Choose \(u_j\) such that \(\norm{f-u_j}_p<\varepsilon\). If \(n\geq j\), then
		\begin{align*}
			\norm{Q_nf-f}_p &\leq \norm{R_nA_n(f-u_j)}_p+\norm{R_nu_j-u_j}_p+\norm{u_j-f}_p \\
			&\leq \lambda\varepsilon+\norm{R_nu_j-u_j}_p+\varepsilon.
		\end{align*}
		Since \(u_j\) takes only finitely many values in \(X\), the term \(\norm{R_nu_j-u_j}_p\) tends to zero.
		Thus
		\[ \limsup_n\norm{Q_nf-f}_p\leq(\lambda+1)\varepsilon. \]
		Letting \(\varepsilon\downarrow0\), we get \(Q_nf\to f\). Hence \(Y\) has the \(\pi_\lambda\)-property.
	\end{proof}

	\begin{corollary}\label{cor:separable-vector-lp-range}
		Let \(1<p<\infty\), let \(X\) be a separable uniformly convex Banach space with the
		\(\pi_\lambda\)-property, and put \(Y=L_p(\mu;X)\). Assume that \(Y\) is separable and that
		\(1\leq\lambda<\Lambda_Y\). Let \(E\) be a Banach space such that \(E^*\) is separable. If \(Z\) is a
		closed subspace of \(\cB(E,Y)\) containing \(\cF(E,Y)\), then \(Z\) has the UBCP. In particular, if
		\(X\) has the metric \(\pi\)-property and \(1<\Lambda_Y\), then the same conclusion holds.
	\end{corollary}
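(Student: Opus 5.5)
The statement is obtained by applying Theorem~\ref{thm:operator-space} with \(F=Y=L_p(\mu;X)\). So we only need to check the hypotheses of that theorem: \(Y\) is separable, \(Y\) is uniformly convex, \(Y\) has the \(\pi_\lambda\)-property, and \(1\leq\lambda<\Lambda_Y\).

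Separability of \(Y\) and the bound \(\lambda<\Lambda_Y\) are assumptions of the corollary. The \(\pi_\lambda\)-property of \(Y\) is exactly Lemma~\ref{lem:separable-vector-lp-pilambda}, applied to the separable space \(X\) with the \(\pi_\lambda\)-property. That lemma uses only separability of \(X\) and \(Y\), not uniform convexity of \(X\).

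Uniform convexity of \(Y\) is the one point needing a reference. The classical result (Day's theorem; see \cite{LT1979}, as in the proof of Corollary~\ref{cor:vector-lp}) states that \(L_p(\mu;X)\) is uniformly convex whenever \(1<p<\infty\) and \(X\) is uniformly convex. The measure \(\mu\) is arbitrary here, so this is the same citation as in the \([0,1]\) case. I expect this to be the only step that needs care, and only in confirming that the cited theorem covers general measure spaces, which it does.

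With these hypotheses checked, Theorem~\ref{thm:operator-space} shows that every closed subspace \(Z\subseteq\cB(E,Y)\) containing \(\cF(E,Y)\) has the UBCP. For the final assertion, if \(X\) has the metric \(\pi\)-property, then \(X\) has the \(\pi_1\)-property, and Lemma~\ref{lem:separable-vector-lp-pilambda} with \(\lambda=1\) gives \(Y\) the metric \(\pi\)-property. The condition \(1<\Lambda_Y\) holds automatically, as noted after the definition of \(\Lambda_F\), since \(Y\) is uniformly convex. So \(\lambda=1\) is admissible, and either Theorem~\ref{thm:operator-space} or Theorem~\ref{thm:operator-space-metric} applies.
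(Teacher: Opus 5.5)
Your proposal is correct and follows the paper's proof: you get uniform convexity of \(Y\) from the Bochner-space result in \cite{LT1979}, the \(\pi_\lambda\)-property of \(Y\) from Lemma~\ref{lem:separable-vector-lp-pilambda}, and then apply Theorem~\ref{thm:operator-space} with \(F=Y\). Your two side remarks are also accurate: the lemma does not need uniform convexity of \(X\), and \(\Lambda_Y>1\) is automatic once \(Y\) is uniformly convex, so the final clause is just the case \(\lambda=1\).
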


	\begin{proof}
		The Bochner space \(Y\) is uniformly convex whenever \(1<p<\infty\) and \(X\) is uniformly convex; see
		\cite{LT1979}. By Lemma \ref{lem:separable-vector-lp-pilambda}, the space \(Y\) has the
		\(\pi_\lambda\)-property. The conclusion follows from Theorem \ref{thm:operator-space}.
	\end{proof}

	We now consider a local \(L_p\)-type application, where the projectional assumption is obtained from
	finite-dimensional stability.

	\begin{definition}
		Let $X$ and $Y$ be two Banach spaces. The Banach-Mazur distance between $X$ and $Y$ is defined as
		\[
		d_{BM}(X,Y) = \inf \left\{ \|T\| \cdot \|T^{-1}\| : T \colon X \to Y \text{ is an isomorphism} \right\}.
		\]
		If $X$ and $Y$ are not isomorphic, set $d_{BM}(X,Y) = \infty$.
	\end{definition}

	\begin{definition}
		Let \(1<p<\infty\) and \(C\geq1\). Then a Banach space \(F\) is an \(\cL_{p,C+}\)-space if for every
		\(\varepsilon>0\) and every finite-dimensional subspace \(H\subseteq F\), there is
		a finite-dimensional subspace \(G\subseteq F\) such that
		\( H\subseteq G \)
		and \( d_{BM} (G,\ell_p^{\dim G})<C+\varepsilon. \)
	\end{definition}

	We use the following form of the almost-isometric stability theorem of Dor--Schechtman--Alspach; see \cite{Dor75,Schechtman79,Alspach83,Randrianantoanina97}.
	\begin{theorem}\label{thm:Lp-stability}
		Let \(1<p<\infty\). There exist \(r_p>1\) and a function
		\( \Gamma_p:[1,r_p)\longrightarrow [1,\infty) \)
		such that \(x_1,\ldots,x_n\in G_n\) and, for every \(n\),
		\(d_{BM}(G_n,\ell_p^{\dim G_n})<C+\eta\).
		Then \(Y\) is complemented in \(L\), and there exists a projection
		\(	P:L\to Y \)
		satisfying
		\(\|P\|\leq \Gamma_p(r). \)
	\end{theorem}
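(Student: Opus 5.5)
The statement as printed has lost its hypothesis. I read it in the standard Dor--Schechtman--Alspach form: \emph{if \(Y\subseteq L=L_p(\nu)\) is a finite-dimensional subspace with \(d_{BM}(Y,\ell_p^{\dim Y})<r<r_p\), then there is a projection \(P:L\to Y\) with \(\|P\|\leq\Gamma_p(r)\), where \(\Gamma_p(r)\to1\) as \(r\downarrow1\).} The plan is to disjointify a near-isometric copy of \(\ell_p^n\), use the norm-one projection onto the span of disjointly supported functions, and then perturb it back onto \(Y\). The paper only cites this result, so what follows is how I would reconstruct it.

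\emph{Step 1 (disjointification).} Fix an isomorphism \(T:\ell_p^n\to Y\) with \(\|T^{-1}\|=1\) and \(\|T\|<r\), and put \(f_i=Te_i\). I would invoke Dor's lemma: if \(\|\sum a_if_i\|_p\) is between \(\|a\|_p\) and \(r\|a\|_p\), then there are pairwise disjoint measurable sets \(A_1,\ldots,A_n\) with \(\|f_i\one_{A_i}\|_p\geq 1-\varphi_p(r)\), where \(\varphi_p(r)\to0\) as \(r\to1\). For \(p\neq2\) this comes from the strict \(p\)-convexity or concavity of \(|s|^p\), applied to averaging \(\|\sum\pm a_if_i\|_p^p\) over random signs. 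For \(p=2\) the assertion is trivial, since every subspace is \(1\)-complemented.

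Set \(g_i=f_i\one_{A_i}\) and \(W=\operatorname{span}\{g_i\}\). Then \(W\) is isometric to a weighted \(\ell_p^n\). It is the range of the contractive projection
\[
P_0f=\sum_i \frac{\int_{A_i} f\,|g_i|^{p-1}\operatorname{sgn} g_i\,d\nu}{\|g_i\|_p^p}\,g_i ,
\]
which has norm \(1\) by Hölder's inequality on each \(A_i\) and the disjointness of the \(A_i\).

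\emph{Step 2 (perturbation).} Define \(S:W\to Y\) by \(Sg_i=f_i\). I would prove \(\|S-I_W\|\leq\kappa_p(r)\) with \(\kappa_p(r)\to0\) as \(r\to1\). This requires a uniform bound
\[
\Bigl\|\sum a_if_i\one_{A_i^c}\Bigr\|_p\leq\kappa\,\Bigl\|\sum a_ig_i\Bigr\|_p .
\]
To get it, I split \(\|\sum a_if_i\|_p^p\) as the sum of the contributions on \(\bigcup A_i\) and on its complement. The upper bound \(r\|a\|_p\) together with the lower bound on \(\sum|a_i|^p\|g_i\|_p^p\) forces the complementary mass to be small. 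For \(p<2\) an extra Clarkson-type step handles the cross terms inside \(\bigcup A_i\).

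Let \(r_p\) be the largest value with \(\kappa_p(r)<1\) on \([1,r_p)\). Define \(U=I_L+(S-I_W)P_0\) on \(L\). Then \(\|U-I_L\|\leq\kappa<1\), so \(U\) is invertible with \(\|U^{-1}\|\leq(1-\kappa)^{-1}\), and \(U(W)=S(W)=Y\). Therefore \(P=UP_0U^{-1}\) is a projection onto \(Y\) with
\[
\|P\|\leq\frac{1+\kappa_p(r)}{1-\kappa_p(r)}=:\Gamma_p(r).
\]

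\emph{Main obstacle.} The hard step is the quantitative Dor lemma in Step 1, together with turning it into the uniform operator estimate of Step 2. The individual estimates \(\|f_i-g_i\|_p\) being small are not enough, because \(n\) is arbitrary: the estimate must be independent of \(n\). I would first try to obtain it directly from the \(p\)-th power comparison summed over all the \(A_i\) simultaneously. If that fails, I would fall back on Alspach's small-into-isomorphism argument, which gives the \(n\)-independent bound at the cost of a smaller \(r_p\).
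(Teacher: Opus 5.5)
Your outline (disjointify, project with norm one onto a disjointly supported copy, perturb) is the same as the heuristic the paper gives after Lemma~\ref{lem:Lp-finite-dimensional-stability}. The parts you actually carry out are correct: \(P_0\) is contractive by H\"older on each \(A_i\), and \(U=I_L+(S-I_W)P_0\), \(P=UP_0U^{-1}\) give a projection onto \(Y\) with \(\|P\|\le(1+\kappa)/(1-\kappa)\). The paper, however, does not prove this theorem; it imports the quantitative content from Dor, Schechtman and Alspach. You try to supply that content yourself, and there the proposal has a genuine gap. The \(n\)-independent estimate \(\|S-I_W\|\le\kappa_p(r)\) in Step~2 carries the whole theorem, and the argument you sketch for it does not work.

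The obstruction is cancellation, and it is not special to \(p<2\). Fix \(a\). The part of \(\|\sum_i a_if_i\|_p^p\) living on \(A_j\) is \(\int_{A_j}|a_jg_j+\sum_{i\ne j}a_if_i\one_{A_j}|^p\), and the cross terms may cancel part of \(a_jg_j\). So the upper bound \(r\|a\|_p\) together with the lower bound on \(\sum_i|a_i|^p\|g_i\|_p^p\) does not force the mass off \(\bigcup_iA_i\) to be small. In any case, the cross terms on the \(A_j\) are themselves part of \(\sum_ia_if_i\one_{A_i^c}\) and would need the same bound.

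Averaging over signs restores \(\tfrac12(|x+y|^p+|x-y|^p)\ge|x|^p\), but it only controls \(\mathbb{E}_\varepsilon\|\sum_i\varepsilon_ia_if_i\one_{A_i^c}\|_p^p\). In general, passing from Rademacher averages to each fixed sign pattern costs an \(n\)-dependent factor, just as the triangle inequality loses \(n^{1-1/p}\).

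What is missing is a genuine \(n\)-independent stability statement. For example: a \((1+\epsilon)\)-embedding \(T:\ell_p^n\to L_p\), \(p\ne2\), lies within \(\delta(\epsilon)\to0\) in operator norm of an isometric embedding \(J\). This is what the paper's phrase ``uniformly close to disjointly generated copies'' refers to, and it is the input taken from the literature.

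Once you have it, Dor's sets become unnecessary. For \(p\ne2\), the equality case of Clarkson's inequality applied to \(\|Je_i\pm Je_j\|_p^p=2\) forces the \(Je_i\) to be disjointly supported. Hence \(J(\ell_p^n)\) is the range of a contractive projection of your form \(P_0\). Your perturbation step with \(S=TJ^{-1}\) and \(\|S-I\|\le\|T-J\|\le\delta\) then gives \(\Gamma_p=(1+\delta)/(1-\delta)\).

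Your fallback to Alspach points in this direction. As written, though, the proposal proves only the routine parts and leaves the essential estimate unproved.
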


	\begin{lemma}
		\label{lem:Lp-finite-dimensional-stability}
		Let \(1<p<\infty\). There exists \(r_p>1\) and a nondecreasing function \( \Gamma_p:[1,r_p)\to [1,\infty) \) with \( \lim_{r\downarrow 1}\Gamma_p(r)=1 \)
		such that if \(N\in\N\), \(Y\subseteq \ell_p^N\), and \( d_{BM}(Y,\ell_p^{\dim Y})<r<r_p, \)
		then there exists a projection \( P:\ell_p^N\to Y \) satisfying \(\norm{P}\leq \Gamma_p(r). \)
	\end{lemma}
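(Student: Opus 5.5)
The plan is to derive the lemma from the almost-isometric stability theorem of Dor--Schechtman--Alspach (Theorem~\ref{thm:Lp-stability}), specialised to \(L=\ell_p^N\). Monotonicity and the limit at \(1\) are then arranged by defining \(\Gamma_p\) as a supremum.

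\emph{The case \(p=2\).} Here every subspace of \(\ell_2^N\) is the range of an orthogonal projection of norm \(1\). So we may take \(r_p=\infty\) (or any value) and \(\Gamma_2\equiv1\).

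\emph{Setting up a nearly disjoint system for \(p\neq2\).} Fix \(Y\subseteq\ell_p^N\) with \(d_{BM}(Y,\ell_p^k)<r\), where \(k=\dim Y\). Choose an isomorphism \(T:\ell_p^k\to Y\) with \(\norm{T^{-1}}=1\) and \(\norm{T}<r\). The vectors \(Te_1,\dots,Te_k\) then satisfy
\[
\norm{a}_p\leq\Bigl\|\sum_i a_iTe_i\Bigr\|_p\leq r\norm{a}_p .
\]
By Dor's theorem (the core of Theorem~\ref{thm:Lp-stability}), such a system is close to a disjointly supported one. Concretely, there are disjointly supported vectors \(f_1,\dots,f_k\in\ell_p^N\) with \(\norm{f_i}_p=1\), and a quantity \(\delta(r)\) depending only on \(p\) and \(r\) with \(\delta(r)\to0\) as \(r\downarrow1\). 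These satisfy
\[
\norm{T-S}\leq\delta(r),
\]
where \(S:\ell_p^k\to W:=\operatorname{span}\{f_i\}\) is the isometry \(Se_i=f_i\).

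\emph{The main obstacle.} The key step is that this estimate is uniform in \(k\) and \(N\). Dor's argument controls how much each \(Te_i\) fails to be concentrated on a set \(\sigma_i\), with the sets \(\sigma_i\) pairwise disjoint. One then takes \(f_i\) to be the normalised restriction of \(Te_i\) to \(\sigma_i\). The estimate must then be summed as an operator bound \(\bigl\|\sum a_i(Te_i-f_i)\bigr\|\lesssim\delta(r)\norm{a}_p\), rather than coordinatewise. The cases \(p<2\) and \(p>2\) need separate convexity or type/cotype estimates for this. I would take this directly from the cited papers of Dor and Schechtman rather than reprove it.

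\emph{Constructing the projection.} Let \(Q:\ell_p^N\to W\) be the averaging projection onto the span of the disjoint vectors \(f_i\), that is, the conditional-expectation type map \(Qx=\sum_i f_i^*(x)f_i\), where \(f_i^*\) is the norming functional of \(f_i\) supported on \(\operatorname{supp} f_i\). Then \(\norm{Q}=1\). Since \(QS=S\), we get \(\norm{QT-S}\leq\norm{T-S}\leq\delta(r)\). Hence, for \(\delta(r)<1\), the map \(QT:\ell_p^k\to W\) is invertible with \(\norm{(QT)^{-1}}\leq(1-\delta(r))^{-1}\). It follows that \(J:=Q|_Y:Y\to W\) is an isomorphism onto \(W\), and
\[
\norm{J^{-1}}=\norm{T(QT)^{-1}}\leq r(1-\delta(r))^{-1}.
\]
Put \(P:=J^{-1}Q\). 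This is a projection of \(\ell_p^N\) onto \(Y\), because \(P|_Y=J^{-1}J=I_Y\). Its norm satisfies \(\norm{P}\leq r/(1-\delta(r))\).

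\emph{Defining \(r_p\) and \(\Gamma_p\).} Choose \(r_p>1\) so that \(\delta(r)<1\) for \(r<r_p\). Define \(\Gamma_p(r)\) as the supremum, over all \(N\) and all \(Y\subseteq\ell_p^N\) with \(d_{BM}(Y,\ell_p^{\dim Y})<r\), of the least norm of a projection onto \(Y\). By the previous step this supremum is at most \(r/(1-\delta(r))\), so \(\Gamma_p(r)\) is finite. It is nondecreasing, since the admissible family grows with \(r\). It is at least \(1\) whenever \(Y\neq\{0\}\); set \(\Gamma_p(r)=1\) if the family is trivial. Finally, \(\Gamma_p(r)\to1\) as \(r\downarrow1\), because \(r/(1-\delta(r))\to1\).
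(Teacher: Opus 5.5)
Your argument is correct and follows the same route as the paper. Both rest on the Dor--Schechtman--Alspach fact that almost isometric copies of \(\ell_p^k\) in \(L_p\) are close to disjointly supported copies, uniformly in \(k\) and \(N\); then both transfer the norm-one projection onto the disjoint span to \(Y\) by perturbation and make \(\Gamma_p\) nondecreasing by a supremum, with your version making the paper's ``gap perturbation'' step explicit through \(P=J^{-1}Q\), \(\|P\|\leq r/(1-\delta(r))\), and, like the paper, taking the uniform operator-norm closeness from the cited literature rather than proving it.
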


	\begin{proof}
		This is the finite-dimensional form of Theorem \ref{thm:Lp-stability}. 
		Indeed, the standard copy of \(\ell_p^d\) is 1-complemented in an \(L_p\)-space,
		and if \(Y\subseteq \ell_p^N\) satisfies \(d_{BM}(Y,\ell_p^d)<r\), then \(Y\) is 
		\(r\)-isomorphic to that 1-complemented one. Theorem \ref{thm:Lp-stability} gives a projection onto \(Y\) whose norm is bounded by a constant
		\(\Gamma_p(r)\), where \(\Gamma_p(r)\to1\) as \(r\downarrow1\).
		Replacing \(\Gamma_p(r)\) by \(\sup_{1\leq s\leq r}\Gamma_p(s)\),  we may assume that \(\Gamma_p\) is nondecreasing.
	\end{proof}

	The reason is that almost isometric copies of \(\ell_p^d\) in \(L_p\) are uniformly close to disjointly generated copies of \(\ell_p^d\).
	Such disjoint copies are ranges of norm-one projections, and a gap perturbation argument transfers the projection to the original subspace with norm tending to one as the Banach--Mazur distance tends to one.

	\begin{proposition}
		\label{prop:LpCplus-piM}
		Let \(1<p<\infty\), and let \(F\) be a separable \(\cL_{p,C+}\)-space.
		Assume that there are \(\eta>0\) and \(\rho\) such that
		\[ (C+\eta)^2<\rho<r_p. \]
		Then \(F\) has the \(\pi_M\)-property for every
		\[ M>(C+\eta)\Gamma_p(\rho). \]
	\end{proposition}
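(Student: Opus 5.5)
Given a finite set \(A\subseteq F\) and \(\varepsilon>0\), I will build a finite-rank projection \(P\) on \(F\) with \(\norm{P}\le(C+\eta)\Gamma_p(\rho)\) that fixes every point of \(A\). This gives the \(\pi_M\)-property for every \(M\) at least this bound; the strict inequality \(M>(C+\eta)\Gamma_p(\rho)\) only leaves room for small approximation errors. The plan has two layers. First, a local piece \(G\supseteq A\) that is close to some \(\ell_p^d\). Second, a projection from all of \(F\) onto \(G\), obtained by passing to a larger local piece \(G'\supseteq G\) and applying Lemma~\ref{lem:Lp-finite-dimensional-stability} inside a copy of \(\ell_p^{N}\).

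\textbf{Step 1.} Let \(H=\operatorname{span}A\). By the \(\cL_{p,C+}\) definition, choose a finite-dimensional \(G\supseteq H\) and an isomorphism \(S:G\to\ell_p^d\) with \(\norm{S}\norm{S^{-1}}<C+\eta\). It is not enough to project onto \(G\) within \(G\); the projection must be defined on all of \(F\).

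\textbf{Step 2.} The natural route is a nested argument. Fix \(x\in F\). Apply the definition again to \(G+\operatorname{span}\{x\}\), or more usefully to \(G\) together with a finite net, to obtain \(G'\supseteq G\) and \(T:G'\to\ell_p^N\) with \(\norm{T}\norm{T^{-1}}<C+\eta\). Inside \(\ell_p^N\), the subspace \(Y=T(G)\) satisfies
\[
d_{BM}(Y,\ell_p^d)\le \norm{S\,T^{-1}|_Y}\,\norm{T\,S^{-1}}<(C+\eta)^2<\rho<r_p .
\]
Lemma~\ref{lem:Lp-finite-dimensional-stability} then gives a projection \(R:\ell_p^N\to Y\) with \(\norm{R}\le\Gamma_p(\rho)\). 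This explains the hypothesis \((C+\eta)^2<\rho\). Pulling back, \(T^{-1}RT\) is a projection of \(G'\) onto \(G\) with norm at most \((C+\eta)\Gamma_p(\rho)\).

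\textbf{Step 3, the main obstacle.} Extend these local projections from \(G'\) to all of \(F\) while keeping the bound. I will use a compactness argument. Since \(F\) is separable, take an increasing sequence of finite-dimensional subspaces \(G'_k\supseteq G\) whose union is dense. For each \(k\), Step 2 gives a projection \(\Pi_k:G'_k\to G\) onto \(G\) with \(\norm{\Pi_k}\le(C+\eta)\Gamma_p(\rho)\). Each \(G'_k\) is obtained by applying the \(\cL_{p,C+}\) property to \(G\) plus the first \(k\) dense vectors. All these maps take values in the fixed finite-dimensional space \(G\) and are uniformly bounded, so a diagonal argument over a countable dense set gives pointwise convergence of a subsequence on \(\bigcup_k G'_k\). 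The limit is a linear map of norm at most \((C+\eta)\Gamma_p(\rho)\), equal to the identity on \(G\). It extends by continuity to a finite-rank projection \(P:F\to G\) with \(Py=y\) for every \(y\in A\).

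The delicate points are these:
\begin{itemize}
\item The \(\cL_{p,C+}\) condition only gives distance \(<C+\eta'\) for each \(\eta'\), so one fixes \(\eta'\le\eta\) uniformly.
\item Monotonicity of \(\Gamma_p\) is needed to replace \(\Gamma_p\) of the actual distance by \(\Gamma_p(\rho)\).
\end{itemize}
If the compactness step encounters trouble, the slack \(M>(C+\eta)\Gamma_p(\rho)\) absorbs the small errors. Together with Remark~\ref{rem:net-to-sequence}, the construction then yields the \(\pi_M\)-property.
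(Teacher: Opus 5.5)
Your proposal is correct and follows essentially the paper's argument: the same estimate \(d_{BM}(T(G),\ell_p^{\dim G})<(C+\eta)^2<\rho\) inside \(\ell_p^N\), the same pull-back \(T^{-1}RT\) of the projection from Lemma~\ref{lem:Lp-finite-dimensional-stability}, and the same diagonal compactness argument in a fixed finite-dimensional range to extend the projection to all of \(F\). The only difference is bookkeeping. You produce one projection onto a single \(G\supseteq A\) for each finite set \(A\), so \(Py=y\) exactly on \(A\), whereas the paper builds a nested chain \(G_1\subseteq G_2\subseteq\cdots\) and a sequence \(P_n\to I_F\) strongly. To make your \(G'_k\) genuinely increasing, choose them inductively by applying the \(\cL_{p,C+}\) property to \(G'_{k-1}+\Span\{x_k\}\).
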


	\begin{proof}
		Let \((x_n)_{n=1}^\infty\) be a dense sequence in \(F\). Since \(F\) is an
		\(\cL_{p,C+}\)-space, we may inductively choose finite-dimensional subspaces
		\(G_1\subseteq G_2\subseteq\cdots\subseteq F\) such that \(x_1,\ldots,x_n\in G_n\) and,
		for every \(n\), \(d_{BM}(G_n,\ell_p^{\dim G_n})<C+\eta\).
		Then \(\overline{\bigcup_{n=1}^\infty G_n}=F\).

		Fix \(n<k\). Choose an isomorphism \(T_k:G_k\to \ell_p^{m_k}\), where \(m_k=\dim G_k\),
		such that \(\norm{T_k}\,\norm{T_k^{-1}}<C+\eta\).
		Then \(T_kG_n\subseteq \ell_p^{m_k}\), and
		\[ d_{BM}(T_kG_n,G_n) \leq \norm{T_k|_{G_n}}\,\norm{(T_k|_{G_n})^{-1}} \leq \norm{T_k}\,\norm{T_k^{-1}} < C+\eta. \]
		Since \(d_{BM}(G_n,\ell_p^{\dim G_n})<C+\eta\), we have
		\[ d_{BM}(T_kG_n,\ell_p^{\dim G_n})<(C+\eta)^2<\rho. \]
		By Lemma \ref{lem:Lp-finite-dimensional-stability}, there exists a projection
		\(P_{n,k}:\ell_p^{m_k}\to T_kG_n\) such that \(\norm{P_{n,k}}\leq \Gamma_p(\rho)\).
		Define \(R_{n,k}=T_k^{-1} P_{n,k}T_k:G_k\to G_n\).
		Then \(R_{n,k}\) is a projection onto \(G_n\), and \( R_{n,k}|_{G_n}=I_{G_n}. \)
		Moreover,
		\[ \norm{R_{n,k}} \leq \norm{T_k^{-1}}\,\norm{P_{n,k}}\,\norm{T_k} < (C+\eta)\Gamma_p(\rho). \]
		Fix \(M>(C+\eta)\Gamma_p(\rho)\). Then \(\norm{R_{n,k}}\leq M\) for \(k>n\).

		We next pass from these finite-dimensional local projections to projections on
		\(F\). 
		For fixed \(n\) and each fixed \(m\ge n\), the restrictions \( R_{n,k}|_{G_m}:G_m\to G_n ( k\ge m),  \) belong to the finite-dimensional \(\cB(G_m,G_n)\) and are
		uniformly bounded by \(M\). By a diagonal compactness argument, we can choose a
		subsequence \((k_j)\) with \(k_j\to\infty\) such that, for every fixed \(m\ge n\), we have \( R_{n,k_j}|_{G_m} \) converges in \(\cB(G_m,G_n)\).

		Define \( P_n:\bigcup_{m=1}^\infty G_m\to G_n \)
		by \(P_nx=\lim_{j\to\infty}R_{n,k_j}x\).
		It follows that \(P_n\) is linear and \( \norm{P_n}\leq M. \)
		Also, we have \(P_n|_{G_n}=I_{G_n}\) and \( P_n\Big(\bigcup_mG_m\Big)\subseteq G_n \).
		Hence \( P_n^2=P_n \)
		on \(\bigcup_mG_m\). Since \(\bigcup_mG_m\) is dense in \(F\), \(P_n\) extends
		uniquely to a bounded operator \( P_n:F\to G_n \)
		with \( \norm{P_n}\leq M. \)
		The extended operator is still a projection, because \(P_n^2=P_n\) on a dense
		subspace and both sides are continuous. Thus \(P_n\) is a finite-rank projection.

		Finally, if \(x\in\bigcup_mG_m\), then \(x\in G_m\) for some \(m\). Whenever
		\(n\ge m\), we have \(x\in G_n\), and therefore \(P_nx=x. \) 
		Thus \(P_nx\to x\) for every \(x\in\bigcup_mG_m\). Since \(\sup_n\norm{P_n}\le M\)
		and \(\bigcup_mG_m\) is dense in \(F\), it follows that \(P_nx\to x\) for \(x\in F\).
		Therefore \(F\) has the \(\pi_M\)-property.
	\end{proof}

	\begin{corollary}
		\label{cor:LpCplus-operator-subspaces}
		Let \(1<p<\infty\), and let \(F\) be a separable uniformly convex
		\(\cL_{p,C+}\)-space. Suppose that there exist \(\eta>0\) and \(\rho\) such that
		\((C+\eta)^2<\rho<r_p\) and \((C+\eta)\Gamma_p(\rho)<\Lambda_F\).
		Let \(E\) be a Banach space such that \(E^*\) is separable. If \( Z\subseteq \cB(E,F) \) is a closed subspace containing \(\cF(E,F)\), then \(Z\) has the UBCP.
	\end{corollary}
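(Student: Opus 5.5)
The plan is to combine Proposition~\ref{prop:LpCplus-piM} with Theorem~\ref{thm:operator-space}. The only step needing any care is choosing a single constant \(M\) that is simultaneously large enough for Proposition~\ref{prop:LpCplus-piM} and small enough for Theorem~\ref{thm:operator-space}.

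First, fix \(\eta>0\) and \(\rho\) as in the hypothesis, so that \((C+\eta)^2<\rho<r_p\) and \((C+\eta)\Gamma_p(\rho)<\Lambda_F\). Since the second inequality is strict, the open interval \(\bigl((C+\eta)\Gamma_p(\rho),\Lambda_F\bigr)\) is nonempty, and I will pick \(M\) in it. Note that \(M\ge1\): we have \(C\ge1\), \(\eta>0\) and \(\Gamma_p\ge1\), so \((C+\eta)\Gamma_p(\rho)>1\). Thus
\[ 1\leq M<\Lambda_F. \]

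Next, \(F\) is a separable \(\cL_{p,C+}\)-space and the condition \((C+\eta)^2<\rho<r_p\) holds, so Proposition~\ref{prop:LpCplus-piM} applies. Because \(M>(C+\eta)\Gamma_p(\rho)\), it tells us that \(F\) has the \(\pi_M\)-property.

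Finally, \(F\) is separable and uniformly convex by hypothesis, has the \(\pi_M\)-property with \(1\le M<\Lambda_F\), and \(E^*\) is separable. Theorem~\ref{thm:operator-space}, applied with \(\lambda=M\), therefore shows that every closed subspace \(Z\subseteq\cB(E,F)\) containing \(\cF(E,F)\) has the UBCP.

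I do not expect a genuine obstacle. The one point to watch is the strictness in the choice of \(M\): Proposition~\ref{prop:LpCplus-piM} only gives the \(\pi_M\)-property for \(M\) strictly larger than \((C+\eta)\Gamma_p(\rho)\). That is why the hypothesis \((C+\eta)\Gamma_p(\rho)<\Lambda_F\) must be strict, so that there is room to fit \(M\) below \(\Lambda_F\).
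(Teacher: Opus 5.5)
Your proposal is correct and is essentially the paper's own proof: choose \(M\) with \((C+\eta)\Gamma_p(\rho)<M<\Lambda_F\), obtain the \(\pi_M\)-property from Proposition~\ref{prop:LpCplus-piM}, and apply Theorem~\ref{thm:operator-space} with \(\lambda=M\). Your extra check that \(M\geq1\) (here \(\rho\in[1,r_p)\) gives \(\Gamma_p(\rho)\geq1\)) is a harmless detail that the paper leaves implicit.
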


	\begin{proof}
		Choose \(M\) such that \((C+\eta)\Gamma_p(\rho)<M<\Lambda_F\).
		By Proposition \ref{prop:LpCplus-piM}, the space \(F\) has the
		\(\pi_M\)-property. The conclusion follows from Theorem \ref{thm:operator-space}.
	\end{proof}

	In particular, every separable \(\cL_{p,1+}\)-space falls under the preceding corollary. The uniform
	convexity used here follows from the remark below. Since \(\Gamma_p(r)\to1\) as \(r\downarrow1\)
	and \(\Lambda_F>1\), we may choose \(\eta>0\) and \(\rho\) sufficiently close to \(1\) so that
	\((1+\eta)^2<\rho<r_p\) and \((1+\eta)\Gamma_p(\rho)<\Lambda_F\).
	More generally, for each fixed uniformly convex \(F\), the same conclusion holds
	for \(\cL_{p,C+}\)-range spaces whenever \(C>1\) is sufficiently close to
	\(1\), with the allowed closeness depending on \(p\) and on \(\Lambda_F\).

	\begin{remark}
		By \cite{Heinrich1980},
		a separable \(\cL_{p,1+}\)-space embeds isometrically into an ultraproduct of finite-dimensional \(\ell_p\)-spaces, hence into an abstract \(L_p\)-space.
		The weak ultralimit projection shows that this copy is \(1\)-complemented.
		Therefore, by the contractive projection theorem for \(L_p\)-spaces, the space is actually an abstract \(L_p\)-space and consequently has the metric \(\pi\)-property, see \cite{Ando1966}.
	\end{remark}

	\section*{Acknowledgments}
	Rui Liu and Jie Shen were partially supported by the National Natural Science Foundation of China (Grant No. 12471131).

\bibliographystyle{abbrv}
\bibliography{cite}

\end{document}

%% file: abbreviation.tex

\def\supp{\operatorname{supp}}
\def\Span{\operatorname{span}}
\def\cspan{\overline{\operatorname{span}}}

\newcommand{\one}{\mathbbm{1}}
\newcommand{\rmd}{\mathrm{d}}
\newcommand{\N}{\mathbb{N}}
\newcommand{\bI}{\mathbb{I}}
\newcommand{\J}{\mathbb{J}}
\newcommand{\bP}{\mathbb{P}}
\newcommand{\Q}{\mathbb{Q}}
\newcommand{\Z}{\mathbb{Z}}
\newcommand{\R}{\mathbb{R}}
\newcommand{\C}{\mathbb{C}}
\newcommand{\K}{\mathbb{K}}
\newcommand{\T}{\mathbb{T}}
\newcommand{\G}{\mathbb{G}}
\newcommand{\cG}{\mathcal{G}}
\newcommand{\cB}{\mathcal{B}}
\newcommand{\cC}{\mathcal{C}}
\newcommand{\cF}{\mathcal{F}}
\newcommand{\cH}{\mathcal{H}}
\newcommand{\cK}{\mathcal{K}}
\newcommand{\cL}{\mathcal{L}}
\newcommand{\cM}{\mathcal{M}}
\newcommand{\cV}{\mathcal{V}}
\newcommand{\fM}{\mathfrak{M}}
\newcommand{\Lip}[1]{\|{#1}\|_{Lip}}
\newcommand{\inp}[2]{\langle#1,#2\rangle}
\newcommand{\intd}[2]{\int_{#1} #2 \mathrm{d} \mu(t)}
\newcommand{\sset}{\Omega_{n,j}}
\newcommand{\EE}[2]{\mathbb{E}\left[#1 | #2\right]}
\newcommand{\norm}[1]{\left\| #1 \right\|}
\newcommand{\abs}[1]{\left| #1 \right|}
\newcommand{\br}[1]{\left(#1 \right)}
\newcommand{\seq}[2]{\left\{ #1_{#2}\right\}_{#2=1}^\infty}
\newcommand{\sseq}[3]{\left\{ #1_{#2_{#3}}\right\}_{#3=1}^\infty}
\newcommand{\pseq}[3]{\left\{ #1_{#2,#3}\right\}_{#3=1}^\infty}

%% file: cite.bib
@article{C2006,
  author  = {Cheng, L.},
  title   = {Ball-covering property of Banach spaces},
  journal = {Israel Journal of Mathematics},
  volume  = {156},
  pages   = {111--123},
  year    = {2006}
}

@article{LZ2021,
  author  = {Luo, Z. and Zheng, B.},
  title   = {The strong and uniform ball covering properties},
  journal = {Journal of Mathematical Analysis and Applications},
  volume  = {499},
  pages   = {1--15},
  year    = {2021}
}

@article{LLLZ2022,
  author  = {Liu, M. and Liu, R. and Lu, J. and Zheng, B.},
  title   = {Ball covering property from commutative function spaces to non-commutative spaces of operators},
  journal = {Journal of Functional Analysis},
  volume  = {283},
  pages   = {1--15},
  year    = {2022}
}

@article{AG2023,
  author  = {Avil{\'e}s, A. and Mart{\'i}nez-Cervantes, G. and Rueda Zoca, A.},
  title   = {A renorming characterisation of Banach spaces containing \(\ell_1(k)\)},
  journal = {Publicacions Matematiques},
  volume  = {67},
  pages   = {601--609},
  year    = {2023}
}

@article{BLS2025,
  author  = {Bao, Q. and Liu, R. and Shen, J.},
  title   = {The ball-covering property of non-commutative spaces of operators on Banach spaces},
  journal = {Banach Journal of Mathematical Analysis},
  volume  = {19},
  pages   = {1--20},
  year    = {2025}
}

@article{BLS2025AP,
  title={Approximation properties and quantitative estimation for uniform ball-covering property of operator spaces},
  author={Bao, Qiyao and Liu, Rui and Shen, Jie},
  journal={arXiv preprint arXiv:2507.02261},
  year={2025}
}

@article{CCL2008,
  author  = {Cheng, L. and Cheng, Q. and Liu, X.},
  title   = {Ball-covering property of Banach spaces that is not preserved under linear isomorphisms},
  journal = {Science in China Series A: Mathematics},
  volume  = {51},
  pages   = {143--147},
  year    = {2008}
}

@article{C1936,
  author  = {Clarkson, J. A.},
  title   = {Uniformly convex spaces},
  journal = {Transactions of the American Mathematical Society},
  volume  = {40},
  pages   = {396--414},
  year    = {1936}
}

@incollection{CKZ2020,
  author    = {Cheng, L. and Kato, M. and Zhang, W.},
  title     = {A survey of ball-covering property of Banach spaces},
  booktitle = {The Mathematical Legacy of Victor Lomonosov--Operator Theory},
  series    = {Advances in Analysis and Geometry},
  volume    = {2},
  publisher = {Birkh{\"a}user},
  pages     = {67--84},
  year      = {2020}
}

@article{CLL2010,
  author  = {Cheng, L. and Luo, Z. and Liu, X. and Zhang, W.},
  title   = {Several remarks on ball-coverings of normed spaces},
  journal = {Acta Mathematica Sinica, English Series},
  volume  = {26},
  pages   = {1667--1672},
  year    = {2010}
}

@article{CLL2023,
  author  = {Ciaci, S. and Langemets, J. and Lissitsin, A.},
  title   = {A characterization of Banach spaces containing \(\ell_1(k)\) via ball-covering properties},
  journal = {Israel Journal of Mathematics},
  volume  = {253},
  pages   = {359--379},
  year    = {2023}
}

@article{CSZ2009,
  author  = {Cheng, L. and Shi, H. and Zhang, W.},
  title   = {Every Banach space with a \(w^*\)-separable dual has a \(1+\varepsilon\)-equivalent norm with the ball covering property},
  journal = {Science in China Series A: Mathematics},
  volume  = {52},
  pages   = {1869--1874},
  year    = {2009}
}

@article{CWZ2011,
  author  = {Cheng, L. and Wang, B. and Zhang, W. and Zhou, Y.},
  title   = {Some geometric and topological properties of Banach spaces via ball coverings},
  journal = {Journal of Mathematical Analysis and Applications},
  volume  = {377},
  pages   = {874--880},
  year    = {2011}
}

@article{FR2016,
  author  = {Fonf, V. P. and Rubin, M.},
  title   = {A reconstruction theorem for locally convex metrizable spaces, homeomorphism groups without small sets, semigroups of non-shrinking functions of a normed space},
  journal = {Topology and its Applications},
  volume  = {210},
  pages   = {97--132},
  year    = {2016}
}

@article{FZ32009,
  author  = {Fonf, V. P. and Zanco, C.},
  title   = {Covering spheres of Banach spaces by balls},
  journal = {Mathematische Annalen},
  volume  = {344},
  pages   = {939--945},
  year    = {2009}
}

@article{H1956,
  author  = {Hanner, O.},
  title   = {On the uniform convexity of {$L^p$} and {$\ell^p$}},
  journal = {Arkiv f{\"o}r Matematik},
  volume  = {3},
  pages   = {239--244},
  year    = {1956}
}

@article{LZ2020,
  author  = {Luo, Z. and Zheng, B.},
  title   = {Stability of the ball-covering property},
  journal = {Studia Mathematica},
  volume  = {250},
  pages   = {19--34},
  year    = {2020}
}

@book{LT1979,
  author    = {Lindenstrauss, J. and Tzafriri, L.},
  title     = {Classical Banach Spaces II: Function Spaces},
  series    = {Ergebnisse der Mathematik und ihrer Grenzgebiete},
  volume    = {97},
  publisher = {Springer},
  year      = {1979}
}

@article{S2021,
  author  = {Shang, S.},
  title   = {The ball-covering property on dual spaces and Banach sequence spaces},
  journal = {Acta Mathematica Scientia. Series B},
  volume  = {41},
  pages   = {461--474},
  year    = {2021}
}

@article{SC2015,
  author  = {Shang, S. and Cui, Y.},
  title   = {Locally \(2\)-uniform convexity and ball-covering property in Banach space},
  journal = {Banach Journal of Mathematical Analysis},
  volume  = {9},
  pages   = {42--53},
  year    = {2015}
}

@article{SC2018,
  author  = {Shang, S. and Cui, Y.},
  title   = {Dentable point and ball-covering property in Banach spaces},
  journal = {Journal of Convex Analysis},
  volume  = {25},
  pages   = {1045--1058},
  year    = {2018}
}

@article{Z2012,
  author  = {Zhang, W.},
  title   = {Characterizations of universal finite representability and {B}-convexity of Banach spaces via ball coverings},
  journal = {Acta Mathematica Sinica, English Series},
  volume  = {28},
  pages   = {1369--1374},
  year    = {2012}
}

@book{M1998,
  author    = {Megginson, Robert E.},
  title     = {An Introduction to Banach Space Theory},
  series    = {Graduate Texts in Mathematics},
  volume    = {183},
  publisher = {Springer},
  year      = {1998}
}

@book{F1999,
  author    = {Folland, Gerald B.},
  title     = {Real Analysis: Modern Techniques and Their Applications},
  edition   = {2},
  publisher = {John Wiley \& Sons},
  year      = {1999}
}

@article{Alspach83,
  author  = {Alspach, D. E.},
  title   = {Small into isomorphisms on {$L_p$}-spaces},
  journal = {Illinois Journal of Mathematics},
  volume  = {27},
  pages   = {300--314},
  year    = {1983}
}

@article{Dor75,
  author  = {Dor, L. E.},
  title   = {On projections in {$L_1$}},
  journal = {Annals of Mathematics},
  volume  = {102},
  pages   = {463--474},
  year    = {1975}
}

@article{Schechtman79,
  author  = {Schechtman, G.},
  title   = {Almost isometric {$L_p$} subspaces of {$L_p(0,1)$}},
  journal = {Journal of the London Mathematical Society},
  volume  = {20},
  pages   = {516--528},
  year    = {1979}
}

@article{Randrianantoanina97,
  title={On isometric stability of complemented subspaces of {$L_p$}},
  author={Randrianantoanina, Beata},
  journal={Israel Journal of Mathematics},
  volume={113},
  number={1},
  pages={45--60},
  year={1999},
  publisher={Springer}
}

@article{Heinrich1980,
  author  = {S. Heinrich},
  title   = {Ultraproducts in Banach space theory},
  journal = {Journal f{\"u}r die reine und angewandte Mathematik},
  volume  = {313},
  year    = {1980},
  pages   = {72--104}
}

@article{Ando1966,
  author  = {T. Ando},
  title   = {Contractive projections in {$L_p$} spaces},
  journal = {Pacific Journal of Mathematics},
  volume  = {17},
  year    = {1966},
  pages   = {391--405}
}

@misc{langemets2026dualbanachspacesballcovering,
      title={Dual Banach spaces with the ball-covering property}, 
      author={Johann Langemets and Emma Mõttus and Natalia Saealle},
      year={2026},
      eprint={2607.10409},
      archivePrefix={arXiv},
      primaryClass={math.FA},
      url={https://arxiv.org/abs/2607.10409}, 
}

@misc{mishra2026ballcoveringpropertymathbblmathbbx,
      title={On the Ball Covering Property of $\mathbb{L}(\mathbb{X}, \mathbb{Y})$}, 
      author={Ankan Mishra and Kallol Paul and Debmalya Sain and Shamim Sohel},
      year={2026},
      eprint={2607.10353},
      archivePrefix={arXiv},
      primaryClass={math.FA},
      url={https://arxiv.org/abs/2607.10353}, 
}
